\documentclass[reqno]{amsart} 
\usepackage{amsfonts, amsmath, amsthm, amssymb, latexsym, graphicx}

\def\b{\mathbb }

\theoremstyle{plain}
\newtheorem{theorem}{Theorem}[section]
\newtheorem{corollary}[theorem]{Corollary}
\newtheorem{lemma}[theorem]{Lemma}
\newtheorem{proposition}[theorem]{Proposition}
\theoremstyle{definition}

\newtheorem{remark}[theorem]{Remark}

\theoremstyle{remark}

\numberwithin{equation}{section}

\newcommand{\mcl}[1]{\mathcal{#1}}

\newcommand{\bz}{\mathbf{z}}

\title[Covariance matrices of CMS models]{Central limit theorems
 for multivariate Bessel processes in the freezing regime II:\\
 the covariance matrices}

\author{Sergio Andraus}
\address{Faculty of Science and Engineering, Chuo University, Kasuga 1-13-27, Bunkyo-Ku, Tokyo 112-8551, Japan}
\email{andraus@phys.chuo-u.ac.jp}
\author{Michael Voit}
\address{Fakult\"at Mathematik, Technische Universit\"at Dortmund,
          Vogelpothsweg 87,
          D-44221 Dortmund, Germany}
\email{michael.voit@math.tu-dortmund.de}

\subjclass[2010]{Primary 60F05; Secondary  60J60, 60B20, 70F10, 82C22, 33C67 }
\keywords{Interacting particle systems, Calogero-Moser-Sutherland models,  central limit theorem,
 Hermite ensembles,  Laguerre ensembles, Dyson Brownian motion, covariance matrix, eigenvalues}

\begin{document}
\date{\today}

\begin{abstract}
Bessel processes  $(X_{t,k})_{t\ge0}$ in $N$ dimensions
are classified via associated root systems and multiplicity constants $k\ge0$.
They describe 
 interacting Calogero-Moser-Suther\-land particle systems with $N$ particles  and are related to
 $\beta$-Hermite and $\beta$-Laguerre ensembles.
Recently, several central limit theorems  were derived for fixed 
$t>0$, fixed starting points, and $k\to\infty$. In this paper we extend the CLT in the A-case from  start in
0 to arbitrary starting distributions by using a limit result for the corresponding Bessel functions. 
We also determine the eigenvalues and eigenvectors of the covariance matrices of the Gaussian limits
and study applications to CLTs for the  intermediate particles
for $k\to\infty$ and then $N\to\infty$.
\end{abstract}

\maketitle

\section{Introduction} 

Integrable interacting particle systems of 
Calogero-Moser-Suther\-land type on $\mathbb R$ with $N$ particles 
 are described by multivariate 
Bessel processes on closed
Weyl chambers in $\mathbb R^N$. These processes are classified via root systems and a finite number of 
 multiplicity parameters $k$
which govern the interactions; see  \cite{CGY}, \cite{R1},  
 \cite{RV1}, \cite{RV2}, \cite{DF}, \cite{DV} and references therein.
Recently, several limit theorems were derived for these processes when one or several 
 multiplicity parameters $k$ tend to infinity;
see  \cite{AKM1}, \cite{AKM2}, \cite{AV}, \cite{V}, and \cite{VW}. In particular,  \cite{V} contains 
central limit theorems for  the  root systems $A_{N-1}$, $B_N$, and $D_N$ when the particles start
 in the origin $0\in\mathbb R$ 
 or, in some cases, with an arbitrary starting
 distribution independent from $k$. In \cite{V}, the  CLTs for $k\to\infty$ were derived for  the $A$-case only when
 the processes start in 0, while in all other cases  arbitrary starting
 distributions were possible. This shortcoming in \cite{V} in 
the $A$-case was caused by the lack of a suitable limit result for the Bessel functions of type $A$
for $k\to\infty$.
 We shall derive the corresponding limit result for the Bessel functions below 
 which then will lead to a CLT for  arbitrary starting
 distributions in Section 2. 

In all CLTs in \cite{V} and in Section 2 below, the limits in the CLTs are essentially independent  from the starting distributions,
 and usually, the limits are $N$-dimensional centered Gaussian distributions where the inverses $S_N:=\Sigma_N^{-1}$
of the covariance matrices
$\Sigma_N$ can be determined explicitely in terms of the zeros of certain classical orthogonal polynomials. For instance, in the case  $A_{N-1}$,
 the zeroes of the Hermite polynomial $H_N$ appear, and in the  case  $B_{N}$, the zeros of appropriate Laguerre polynomials appear.
We  determine the eigenvalues and eigenvectors of the matrices  $S_N$ and thus of $\Sigma_N$ in these cases.
The results are surprisingly simple.
These diagonalizations of  $S_N$ and  $\Sigma_N$ may be applied to
 the limit behavior of  the middle particle in the cases  $A_{N-1}$ when we first take
 $k\to\infty$ and then  $N\to\infty$. 
We  carry out the diagonalizations 
for the $A$-cases by using the empirical distributions $\mu_N$
 of the zeroes of the Hermite polynomials $H_N$ in combination with the finite systems of orthogonal polynomials
associated with the measure $\mu_N$ introduced in Section 3.
 Corresponding results for the $B$-case are presented in Section 4
for the multiplicities $(k_1,k_2)=(\nu\cdot\beta,\beta)$ with $\nu>0$ fixed and $\beta\to\infty$.
Here, the zeroes of classical Laguerre polynomials (with a parameter depending on $\nu$) 
instead of Hermite polynomials appear. In both cases, i.e., the Hermite as well as the Laguerre case, we get finite systems
of orthogonal polynomials depending on $N$ which converge for $N\to\infty$
 to the Tchebychev-polynomials of second kind which are orthogonal with respect to Wigner's semicircle distribution.

The results of this paper on Bessel processes with start in $0\in\mathbb R^N$ are closely related with 
 central limit theorems for $\beta$-Hermite and $\beta$-Laguerre ensembles for the spectra of tridiagonal random matrix models
due to Dumitriu and Edelman \cite{DE1}. In particular, our freezing results correspond in some cases  to the limits $\beta\to\infty$
in \cite{DE2}. In particular, \cite{DE2} contains explicit formulas for the covariance matrices $\Sigma_N$
 of the Gaussian limits while we here use explicit formulas for their inverses $S_N$ as in \cite{V}.
 In general, most of our results below for the starting point $x=0$ admit interpretations in random matrix theory; 
for the background here we refer to \cite{D}, \cite{Me}, as well as to \cite{RRV} for some specific results.

We also mention that
the Bessel processes are diffusions on Weyl chambers which satisfy some stochastic 
differential equations; see \cite{CGY} and references there. These SDEs are used in \cite{AV} and \cite{VW}
 to derive  strong laws of large numbers and functional central limit theorems for $X_{t,k}$ for  $k\to\infty$ with  strong
rates of convergence, whenever the processes start in points of the form $\sqrt k\cdot x$ where $x$
 is some point in the interior of the Weyl chamber. These limit theorems are even locally uniform in $t$.
It should be noticed that while the CLTs  in \cite{AV} and \cite{VW} may have different forms, 
in some cases similar Gaussian limits appear with covariance matrices which are closely related to the matrices $S_N$ and $\Sigma_N$.
Hence, the diagonalization results below admit applications for the CLTs in \cite{VW}.

\section{A central limit theorem in the $A$-case for arbitrary starting distributions}\label{CLT-A}

Consider the root system $A_{N-1}$ first. The associated Bessel processes $(X_{t,k})_{t\ge0}$ 
live on the closed Weyl chamber
$$C_N^A:=\{x\in \mathbb R^N: \quad x_1\ge x_2\ge\ldots\ge x_N\},$$
 the generator of the transition semigroup is 
\begin{equation}\label{def-L-A} L_Af:= \frac{1}{2} \Delta f +
 k \sum_{i=1}^N\Bigl( \sum_{j\ne i} \frac{1}{x_i-x_j}\Bigr) \frac{\partial}{\partial x_i}f ,
 \end{equation}
where we regard the multiplicity $k\in[0,\infty[$ as a parameter and we assume reflecting boundaries in the usual sense (see, for example, \cite[p. 97]{KS}).

We are interested in limit theorems for  $(X_{t,k})_{t\ge0}$
for fixed $t>0$ and  $k\to\infty$.
For this we recall that by \cite{R1},  \cite{RV1}, \cite{RV2}, the 
 transition probabilities are given for $t>0$,  $x\in C_N^A$, $S\subset C_N^A$ a Borel set, by
\begin{equation}\label{density-A}
K_t(x,S)=c_k^A \int_S \frac{1}{t^{\gamma_A+N/2}} e^{-(\|x\|^2+\|y\|^2)/(2t)} J_k^A\Big(\frac{x}{\sqrt{t}}, \frac{y}{\sqrt{t}}\Big) 
\cdot w_k^A(y)\> dy
\end{equation}
with
\begin{equation}\label{def-wk-A-gamma}
w_k^A(x):= \prod_{i<j}(x_i-x_j)^{2k}, \quad\quad \gamma_A=kN(N-1)/2, \end{equation}
and the Macdonald-Mehta-Opdam   constant
\begin{equation}\label{const-A}
 c_k^A:= \Bigl(\int_{C_N^A}  e^{-\|y\|^2/2}\cdot \prod_{i<j} (y_i-y_j)^{2k} \> dy\Bigr)^{-1}
=\frac{N!}{(2\pi)^{N/2}} \cdot\prod_{j=1}^{N}\frac{\Gamma(1+k)}{\Gamma(1+jk)}.
\end{equation}
Here,
$J_k^A$ is a multivariate Bessel function of type $A$ with multiplicity $k$;
 see e.g. \cite{R1},  \cite{AKM1}.
 We here only  recapitulate that 
$J_k^A$ is analytic on $\mathbb C^N \times \mathbb C^N $ with
$ J_k^A(x,y)>0$ for $x,y\in \mathbb R^N $, and with
 $J_k^A(x,y)=J_k^A(y,x)$ and $J_k^A(0,y)=1$
for  $x,y\in \mathbb C^N $. Further properties will be discussed below.

If we start in $x=0\in \mathbb R^N$, then $X_{t,k}$ has the  density
\begin{equation}\label{density-A-0}
 \frac{c_k}{t^{\gamma+N/2}} e^{-\|y\|^2/(2t)} \cdot w_k(y)\> dy
\end{equation}
on $C_N^A$ for $t>0$, 
which is in particular well-known for $k=1/2, 1,2$ and $t=1$ as the distribution of the ordered eigenvalues of
Gaussian orthogonal, unitary, and symplectic ensembles; see e.g. \cite{D}.
For general $k>0$ it is known from the tridiagonal $\beta$-Hermite ensembles of \cite{DE1}.

It is well-known (see \cite{AKM1} and also Section 6.7 of \cite{S}) that the density (\ref{density-A-0})
is maximal on $C_N^A$  precisely
 for $y=\sqrt 2\cdot {\bf z}$ where $ {\bf z}\in C_N^A$ is the vector
with the ordered zeros of the classical  Hermite polynomial $H_N$ as entries
where, as usual,  the polynomials $(H_N)_{N\ge 0}$ are orthogonal w.r.t.
 the density  $e^{-x^2}$. More precisely, we have  the following useful
characterization of the vector $ {\bf z}$; see \cite{AV}:

\begin{lemma}\label{char-zero-A}
 For $z\in C_N^A$, the following statements  are equivalent:
\begin{enumerate}
\item[\rm{(1)}] The function $W_A(x):=\sum_{i,j: i<j} \ln(x_i-x_j) -\|x\|^2/2$ is maximal at $\bz\in C_N^A$;
\item[\rm{(2)}] For $i=1,\ldots,N$:  $z_i= \sum_{j: j\ne i} \frac{1}{z_i-z_j}$;
\item[\rm{(3)}] $\bz=(z_{1,N},\ldots,z_{N,N})$ for
 the ordered zeros $z_{1,N}>\ldots> z_{N,N}$ of $H_N$.
\end{enumerate}
\end{lemma}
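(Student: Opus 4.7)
The plan is to first establish (1)$\Leftrightarrow$(2) by a direct calculation of critical points together with a strict concavity argument, then to link (2) to (3) via the classical second-order ODE for $H_N$.

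For (1)$\Leftrightarrow$(2), I would differentiate $W_A$ term by term: using $\partial_i \ln(x_i-x_j)=1/(x_i-x_j)$ for $j<i$ and the sign flip for $j>i$, one collects
\[
\frac{\partial W_A}{\partial x_i}(x)=\sum_{j\neq i}\frac{1}{x_i-x_j}-x_i,
\]
so the vanishing gradient condition is precisely (2). To conclude that critical points are global maxima I would verify that $W_A$ is strictly concave on the interior $C_N^{A,\circ}$: a direct computation of the Hessian of the logarithmic part gives, for a test vector $v$,
\[
v^\top \mbox{Hess}\Bigl(\sum_{i<j}\ln(x_i-x_j)\Bigr)v=-\sum_{i<j}\frac{(v_i-v_j)^2}{(x_i-x_j)^2}\le 0,
\]
and adding the Hessian of $-\|x\|^2/2$, namely $-I$, produces strict negative definiteness. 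Since $W_A(x)\to-\infty$ both on $\partial C_N^A$ (as some $x_i-x_j\to 0^+$) and as $\|x\|\to\infty$ (where the quadratic term dominates), $W_A$ attains a unique maximum in $C_N^{A,\circ}$, which must be the unique critical point, giving (1)$\Leftrightarrow$(2) and simultaneously the uniqueness of solutions to (2).

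For (2)$\Leftrightarrow$(3), I would invoke the Hermite differential equation $H_N''(x)-2xH_N'(x)+2NH_N(x)=0$. Evaluating at a zero $z_i$ of $H_N$ yields $H_N''(z_i)=2z_i\,H_N'(z_i)$. On the other hand, writing $H_N(x)=c\prod_{j=1}^N(x-z_j)$, a standard logarithmic-derivative identity gives
\[
\frac{H_N''(z_i)}{H_N'(z_i)}=2\sum_{j\neq i}\frac{1}{z_i-z_j},
\]
and comparing the two expressions yields $z_i=\sum_{j\neq i}1/(z_i-z_j)$, so the ordered-zero vector of $H_N$ satisfies (2). Combined with the uniqueness of the solution to (2) in $C_N^A$ established above, this gives (3)$\Rightarrow$(2) trivially and (2)$\Rightarrow$(3) by uniqueness.

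I expect the only genuinely nontrivial step to be the strict concavity estimate, which underpins both uniqueness and the direction (2)$\Rightarrow$(3); the gradient computation and the ODE manipulation are routine. An alternative presentation, if one wants to avoid the concavity argument, would be to appeal directly to Stieltjes' classical electrostatic interpretation of the zeros of $H_N$, but the concavity route keeps the proof self-contained and short.
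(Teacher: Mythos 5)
Your proof is correct, and it is the classical Stieltjes-style argument: the gradient of $W_A$ yields the electrostatic equilibrium equations in (2), strict concavity (the Hessian computation is right) plus blow-up of $W_A$ near $\partial C_N^A$ and at infinity gives a unique critical point equal to the unique maximizer, and the Hermite ODE $H_N''-2xH_N'+2NH_N=0$ evaluated at a zero, together with the logarithmic-derivative identity, shows the Hermite zeros solve (2). The paper does not reprove this lemma (it cites \cite{AV}, where the same electrostatic/ODE argument is used), so your proposal matches the standard approach.
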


This characterization was used in \cite{V} to prove  the following central limit theorem (please notice that the limit
$N(0,t\cdot\Sigma_N)$ there must be replaced by $N(0,\Sigma_N)$): 

\begin{theorem}\label{clt-main-a}
Consider the  Bessel processes $(X_{t,k})_{t\ge0}$ of type $A_{N-1}$ on $C_N^A$ for $k\ge0$ with start in $0\in C_N^A$.
Then, for each $t>0$,
$$\frac{X_{t,k}}{\sqrt t} -  \sqrt{2k}\cdot (z_{1,N},\ldots,z_{N,N})$$
converges for $k\to\infty$ to the centered $N$-dimensional distribution $N(0,\Sigma_N)$
with the regular covariance matrix $\Sigma_N$ with $\Sigma_N^{-1}=S_N=(s_{i,j})_{i,j=1}^N$ and
\begin{equation}\label{covariance-matrix-A}
s_{i,j}:=\left\{ \begin{array}{r@{\quad\quad}l}  1+\sum_{l\ne i} (z_{i,N}-z_{l,N})^{-2} & \text{for}\quad i=j \\
   -(z_{i,N}-z_{j,N})^{-2} & \text{for}\quad i\ne j  \end{array}  \right.  . 
\end{equation}
 The matrix $S_N$ satisfies $det\> S_N =N!$.
\end{theorem}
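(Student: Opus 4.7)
My plan is to split the statement into the CLT proper (attributed in the excerpt to \cite{V}) and the determinantal identity $\det S_N = N!$, and attack them in that order. For the CLT I would apply Laplace's method directly to the density \eqref{density-A-0} of $X_{t,k}/\sqrt t$, taking $t=1$ without loss of generality. Setting $y = \sqrt{2k}\,\bz + u$ and Taylor-expanding the log-integrand in powers of $k^{-1/2}$, the linear term in $u$ equals
\[
\sqrt{2k}\sum_{i=1}^{N}u_i\Bigl(\sum_{j\ne i}\frac{1}{z_{i,N}-z_{j,N}}-z_{i,N}\Bigr),
\]
which vanishes identically by condition~(2) of Lemma~\ref{char-zero-A}. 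The quadratic term simplifies, after the elementary rearrangement
\[
\sum_{i<j}\frac{(u_i-u_j)^2}{(z_{i,N}-z_{j,N})^2}+\|u\|^2 \;=\; u^{\top}S_N u,
\]
to $-\tfrac12 u^{\top}S_N u$, while higher-order terms vanish as $k\to\infty$. Pointwise density convergence together with the strict concavity of the log-integrand near $\sqrt{2k}\,\bz$ (assured by the positive-definiteness of $S_N$) and its super-Gaussian decay at infinity then yields the Gaussian limit $N(0,\Sigma_N)$.

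For the determinantal identity I would match normalizations in this same Laplace expansion. Because the Gaussian limit density integrates to $1$ and Scheff\'e's lemma upgrades the pointwise convergence of densities to $L^1$ convergence here, comparing leading prefactors gives
\[
(\det S_N)^{1/2} = \lim_{k\to\infty}(2\pi)^{N/2}\,c_k^A\, e^{-k\|\bz\|^2}(2k)^{kN(N-1)/2}\prod_{i<j}(z_{i,N}-z_{j,N})^{2k}.
\]
I would then substitute the Macdonald--Mehta--Opdam constant from \eqref{const-A}, apply Stirling's formula to each $\Gamma(1+jk)$, and invoke the two classical identities $\|\bz\|^2=N(N-1)/2$ (immediate from $e_1(\bz)=0$ and $e_2(\bz)=-N(N-1)/4$) together with the Hermite discriminant
\[
\prod_{i<j}(z_{i,N}-z_{j,N})^2 = 2^{-N(N-1)/2}\prod_{j=1}^{N}j^j.
\]
All $k$-dependent factors should cancel exactly, leaving $\det S_N = N!$.

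The main technical hurdle I anticipate is the bookkeeping in this last step: several diverging factors must telescope precisely, and an off-by-a-power slip in Stirling, in the discriminant, or in the power of $2k$ would spoil the numerical constant. A conceptually cleaner alternative---evidently the one pursued in Section~3---is to diagonalize $S_N$ outright: the small cases $N=1,2,3$ are consistent with the spectrum of $S_N$ being $\{1,2,\dots,N\}$, which would yield $\det S_N = N!$ at once, and the natural way to verify this is to build explicit eigenvectors from the finite system of polynomials orthogonal with respect to the empirical measure of the zeros $z_{1,N},\ldots,z_{N,N}$.
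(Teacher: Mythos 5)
Your proposal reproduces essentially the argument used in \cite{V} for Theorem~\ref{clt-main-a} (and mirrored in the paper's proof of Theorem~\ref{clt-main-a-general-x}): a Laplace expansion of the log-density about $\sqrt{2k}\,\bz$, cancellation of the linear term by Lemma~\ref{char-zero-A}(2), identification of the Hessian with $S_N$, dominated convergence (your Scheff\'e step is equivalent), and Stirling's formula applied to the Macdonald--Mehta--Opdam constant together with $\|\bz\|^2=N(N-1)/2$ and the Hermite discriminant to obtain $\det S_N=N!$. Your closing observation that $\det S_N=N!$ would also drop out of the spectrum $\{1,\dots,N\}$ is correct, but in the paper that diagonalization (Theorem~\ref{ev-a}) is a later, independent result rather than the route taken to the determinant here.
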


We now extend this CLT to arbitrary starting points  $x\in C_N^A$ and even 
 arbitrary starting distributions on $ C_N^A$. However, the statement of the CLT will be slightly more complicated than for the other root systems in \cite{V},
 as the systems $A_{N-1}$ are not reduced on $\mathbb R^N$. This means that with the vector ${\bf 1}:=(1,\ldots,1)\in \mathbb R^N$,  
the space $\mathbb R^N$ can be decomposed into   $\mathbb R\cdot {\bf 1}$
and its orthogonal complement
 $$ {\bf 1}^\perp=\{x\in\mathbb R^N:\> \sum_i x_i=0\} \subset \mathbb R^N$$ 
where the associated Weyl group 
(which is the symmetric group $S_N$ here) acts on both spaces separately. 
It will turn out that the limit behavior of the CLT is slightly different on both components.
To describe this, 
we denote the orthogonal projections from  $ \mathbb R^N$ onto  $\mathbb R\cdot {\bf 1}$ and  $ {\bf 1}^\perp$ by
 $\pi_{\bf 1}$ and $\pi_{{\bf 1}^\perp}$ respectively. In particular, for all $x\in \mathbb R^N$,
 $\pi_{\bf 1}(x)= \bar x{\bf 1} $ for the center of gravity
 $\bar x=\frac{1}{N}\sum_{i=1}^N x_i$ of the particles.

\begin{theorem}\label{clt-main-a-general-x}
Consider the  Bessel processes $(X_{t,k})_{t\ge0}$ of type $A_{N-1}$ on $C_N^A$ for $k\ge0$ with a fixed starting point $x\in C_N^A$.
Then, for each $t>0$,
$$\frac{X_{t,k}}{\sqrt t} -  \sqrt{2k}\cdot (z_{1,N},\ldots,z_{N,N})$$
converges for $k\to\infty$ to the  $N$-dimensional normal distribution $N(\pi_{\bf 1}(x/\sqrt{t}),\Sigma_N)$
with $\Sigma_N$ as in Theorem \ref{clt-main-a}.
\end{theorem}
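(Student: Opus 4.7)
The plan is to reduce Theorem~\ref{clt-main-a-general-x} to the known $x=0$ case (Theorem~\ref{clt-main-a}) via an explicit density ratio, whose limit is controlled by a new freezing asymptotic for the type-$A$ Bessel kernel $J_k^A$. Set $Y_k:=X_{t,k}/\sqrt t-\sqrt{2k}\,\bz$ and substitute $y=\sqrt t\,(u+\sqrt{2k}\,\bz)$ in the transition density (\ref{density-A}). A direct computation shows that the Lebesgue density $f_k(\cdot;x)$ of $Y_k$ under the law starting at $x$ factorises as
\begin{equation*}
 f_k(u;x)\;=\;f_k(u;0)\cdot e^{-\|x\|^2/(2t)}\,J_k^A\!\left(\tfrac{x}{\sqrt t},\;\sqrt{2k}\,\bz+u\right).
\end{equation*}
By Theorem~\ref{clt-main-a}, $f_k(\cdot;0)$ converges pointwise on $\mathbb R^N$ to the density of $N(0,\Sigma_N)$, so the entire problem reduces to identifying the limit of the Bessel-function prefactor.

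The main technical step, and the input explicitly advertised as missing in~\cite{V}, is the freezing asymptotic
\begin{equation*}
 J_k^A\!\left(\tfrac{x}{\sqrt t},\;\sqrt{2k}\,\bz+u\right)\;\longrightarrow\;\exp\!\Bigl(\tfrac{\|\pi_{\mathbf{1}^\perp}(x)\|^2}{2t}\,+\,\tfrac{\bar x}{\sqrt t}\sum_{i=1}^N u_i\Bigr),
\end{equation*}
to be proved locally uniformly in $u$, with $\bar x=\tfrac1N\sum_i x_i$. Two natural routes suggest themselves. The first exploits the Dunkl-operator eigenequations $p(\partial_y)J_k^A(\bxi,y)=p(\bxi)\,J_k^A(\bxi,y)$ for symmetric polynomials $p$: rescaling $y=\sqrt{2k}\,\bz+u$, inserting the critical-point identities of Lemma~\ref{char-zero-A}, and extracting the leading order in $1/\sqrt k$ yields a system of first-order differential identities for the limiting function whose unique solution is the claimed exponential. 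The second is a Laplace-type expansion of an integral or Jack-series representation of $J_k^A$ centred at $\bz$, where the Hessian of the exponent is exactly $-S_N$. That only $\bar x$ (and not the full transverse component of $x$) appears linearly in $u$ reflects the fact that the Weyl group $S_N$ acts trivially on $\mathbb R\,\mathbf{1}$ and by permutation on $\mathbf{1}^\perp$; this is what forces the limiting mean of $Y_k$ to live on $\mathbb R\,\mathbf{1}$ rather than to be the full vector $x/\sqrt t$.

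Once the asymptotic is in hand, the conclusion is a short Gaussian computation. Summing the rows of (\ref{covariance-matrix-A}) one sees immediately that $S_N\mathbf{1}=\mathbf{1}$, so with $\mu:=\pi_{\mathbf{1}}(x/\sqrt t)=(\bar x/\sqrt t)\mathbf{1}$ one obtains $\mu^{\top}S_N u=(\bar x/\sqrt t)\sum_i u_i$ and $\mu^{\top}S_N\mu=N\bar x^2/t$. Combined with the decomposition $\|x\|^2=N\bar x^2+\|\pi_{\mathbf{1}^\perp}(x)\|^2$, the freezing asymptotic identifies the limit of the prefactor with $\exp(\mu^{\top}S_N u-\tfrac12\mu^{\top}S_N\mu)$, which is exactly the Radon--Nikodym derivative of $N(\mu,\Sigma_N)$ with respect to $N(0,\Sigma_N)$. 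Hence $f_k(u;x)$ converges pointwise to the density of $N(\mu,\Sigma_N)$, and Scheff\'e's lemma upgrades this to convergence in total variation and a fortiori in distribution. The main obstacle is clearly the Bessel-function asymptotic of the second paragraph, which is precisely the ingredient whose absence forced~\cite{V} to restrict to $x=0$ in the $A$-case; every other step is either immediate from (\ref{covariance-matrix-A}) or a standard density-convergence argument, and the extension to an arbitrary $k$-independent starting distribution follows by conditioning and dominated convergence.
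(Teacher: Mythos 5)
Your overall strategy matches the paper's: both rest on the same new freezing asymptotic for $J_k^A$, and the only remaining work is a Gaussian bookkeeping step. The paper proves the asymptotic by first separating off the $\mathbb R\,{\bf 1}$-component via Lemma~\ref{decomposition-a} (which accounts for the factor $e^{N\bar x\bar y}$) and then establishing the new Theorem~\ref{limit-Bessel-a} on ${\bf 1}^\perp$ by a direct analysis of the Jack-polynomial expansion $J_k^A={}_0\mcl F_0^{(1/k)}$ of \cite{BF}: after rescaling, the leading-order partitions are precisely those with $\tau_1=\tau_2$ (because $e_1(x)=0$ on ${\bf 1}^\perp$), which collapses the series to $\exp(4e_2(x)e_2(y)/N(N-1))$. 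Your claimed limit
\begin{equation*}
J_k^A\!\left(\tfrac{x}{\sqrt t},\sqrt{2k}\,\bz+u\right)\longrightarrow \exp\!\Bigl(\tfrac{\|\pi_{{\bf 1}^\perp}(x)\|^2}{2t}+\tfrac{\bar x}{\sqrt t}\textstyle\sum_i u_i\Bigr)
\end{equation*}
is indeed the correct combination of these two facts (using $\|\bz\|^2=N(N-1)/2$), and your subsequent Radon--Nikodym computation, using $S_N{\bf 1}={\bf 1}$, is a clean and correct way to identify the shifted mean; this is a slightly different organisation than the paper's (which instead uses the translation invariance \eqref{translation-invariance-A} to reduce to $x\in{\bf 1}^\perp$ and then argues by dominated convergence rather than Scheff\'e).

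The genuine gap is that you do not actually prove the Bessel asymptotic — the step you yourself call ``the main technical step, and the input explicitly advertised as missing in \cite{V}.'' You sketch two plausible routes (Dunkl eigenequations, Laplace expansion of the Jack series) but carry neither out, and without that argument the proof does not close: everything else in your writeup is an application of this one limit. The paper devotes Lemma~\ref{decomposition-a} and the entire proof of Theorem~\ref{limit-Bessel-a} to precisely this point, and the latter requires a nontrivial analysis of the coefficients $c_\tau/(c_\tau' (kN)_\tau)$ and of which partitions survive on ${\bf 1}^\perp$; it is not a routine rescaling. A secondary, more minor issue: you cite Theorem~\ref{clt-main-a} for \emph{pointwise} convergence of the densities $f_k(\cdot;0)$, but that theorem as stated only gives distributional convergence. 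The pointwise statement is established in the proof (of Theorem 2.2 in \cite{V}), so it is available, but you should cite the proof (or re-derive the local density limit \eqref{summary-a1}) rather than the theorem's statement. You should also note, as the paper does via \eqref{part-est-a}--\eqref{est-j-a}, that the Bessel asymptotic must be accompanied by a uniform bound on $J_k^A(x,\sqrt{2k}\bz+u)$ to justify passing to the limit under the integral — Scheff\'e covers the density convergence once the pointwise limit is secured, but you still need the locally uniform control of $J_k^A$ (or the dominated-convergence estimates) to go from ``the series limit at fixed $u$'' to ``uniform on compacta,'' which Theorem~\ref{limit-Bessel-a} supplies.
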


For the proof of Theorem \ref{clt-main-a-general-x}
 we mainly 
follow the ideas of the proofs of Theorem 3.3 and Corollary 3.7 in \cite{V} in the $B$-case.
 As main ingredient we need some facts on  $J_k^A$. We first recapitulate the following well-known 
decomposition; see e.g. \cite{BF}:

\begin{lemma}\label{decomposition-a}
For all $x,y\in \mathbb R^N$,
\begin{equation}\label{bessel-reduction-A}
 J_k^A( x,y)= e^{\langle\pi_{\bf 1}(x), \pi_{\bf 1}(y)\rangle}\cdot  J_k^A( \pi_{{\bf 1}^\perp}(x),
\pi_{{\bf 1}^\perp}(y))=e^{N\bar x\bar y}\cdot  J_k^A( \pi_{{\bf 1}^\perp}(x),
\pi_{{\bf 1}^\perp}(y)).
\end{equation}
\end{lemma}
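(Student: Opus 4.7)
The plan is to derive the factorization from the characterization of $J_k^A$ as a joint eigenfunction of the Dunkl operators associated with the root system $A_{N-1}$. The second equality is immediate: since $\pi_{\bf 1}(x)=\bar x\cdot {\bf 1}$ and $\pi_{\bf 1}(y)=\bar y\cdot{\bf 1}$, we have $\langle \pi_{\bf 1}(x),\pi_{\bf 1}(y)\rangle=N\bar x\bar y\cdot\|{\bf 1}\|^2/N = N\bar x\bar y$, so only the first equality requires real work.

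For the first equality I would invoke the standard characterization of $J_k^A(\cdot,y)$ as the unique analytic, $S_N$-symmetric function $f$ on $\mathbb{C}^N$ with $f(0)=1$ satisfying $T_i^k f=y_i f$ for $i=1,\ldots,N$, where
$$T_i^k f(x)=\partial_i f(x)+k\sum_{j\ne i}\frac{f(x)-f(\sigma_{ij}x)}{x_i-x_j}$$
is the $A_{N-1}$ Dunkl operator and $\sigma_{ij}$ the transposition of coordinates $i$ and $j$. The first substantive step is to compute $\sum_{i=1}^N T_i^k$: pairing the reflection terms labelled $(i,j)$ and $(j,i)$ yields a cancellation (the factor $f(x)-f(\sigma_{ij}x)$ agrees for both orderings while $(x_i-x_j)^{-1}$ and $(x_j-x_i)^{-1}$ have opposite signs), so
$$\sum_{i=1}^N T_i^k=\sum_{i=1}^N\partial_i.$$
Geometrically this just says that the $A_{N-1}$ Dunkl data are trivial along the invariant axis $\mathbb R\cdot{\bf 1}$. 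Applied to $J_k^A(\cdot,y)$ it produces $\sum_i\partial_{x_i}J_k^A(x,y)=\bigl(\sum_i y_i\bigr)J_k^A(x,y)=N\bar y\,J_k^A(x,y)$.

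Writing $x=\bar x\,{\bf 1}+x'$ with $x'=\pi_{{\bf 1}^\perp}(x)$ and setting $g(s):=J_k^A(s{\bf 1}+x',y)$, the displayed identity is the linear ODE $g'(s)=N\bar y\,g(s)$ with $g(0)=J_k^A(x',y)$, so $g(s)=e^{N\bar y s}J_k^A(x',y)$. Evaluating at $s=\bar x$ gives
$$J_k^A(x,y)=e^{N\bar x\bar y}J_k^A(\pi_{{\bf 1}^\perp}(x),y).$$
To eliminate $\bar y$ from the remaining factor I would then exploit the symmetry $J_k^A(u,v)=J_k^A(v,u)$ and repeat the argument in the second variable: since $\overline{\pi_{{\bf 1}^\perp}(x)}=0$, applying the identity just derived with the roles of the two arguments exchanged yields $J_k^A(\pi_{{\bf 1}^\perp}(x),y)=e^{N\cdot 0\cdot\bar y}J_k^A(\pi_{{\bf 1}^\perp}(x),\pi_{{\bf 1}^\perp}(y))=J_k^A(\pi_{{\bf 1}^\perp}(x),\pi_{{\bf 1}^\perp}(y))$, and the claimed decomposition follows.

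The only non-routine ingredient is the cancellation computation giving $\sum_i T_i^k=\sum_i\partial_i$, and this is a short symmetrisation argument; everything else is either definition unpacking or a one-variable exponential ODE. I therefore expect no serious obstacle. As a sanity check, the result matches the geometric picture that the $A_{N-1}$ root system lives inside ${\bf 1}^\perp$, so $J_k^A$ should behave like a trivial plane wave along the orthogonal axis $\mathbb R\cdot{\bf 1}$ and like a genuine $(N-1)$-dimensional Bessel function on ${\bf 1}^\perp$.
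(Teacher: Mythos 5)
The paper gives no proof of this lemma---it is cited to Baker--Forrester \cite{BF}---so your self-contained Dunkl-operator derivation is genuinely new content here, and the computational backbone (the cancellation $\sum_i T_i^k=\sum_i\partial_i$, the resulting first-order ODE $g'(s)=N\bar y\,g(s)$ along the $\mathbb R\cdot{\bf 1}$ axis, and the use of $J_k^A(u,v)=J_k^A(v,u)$ to strip $\pi_{\bf 1}(y)$ from the second argument) is correct.

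There is, however, a real imprecision in the characterization you invoke: $J_k^A(\cdot,y)$ is \emph{not} the unique analytic $S_N$-symmetric solution of $T_i^k f=y_if$ for $i=1,\dots,N$ with $f(0)=1$. In fact, no such function exists for generic $y$: if $f$ is $S_N$-invariant then every difference term $f(x)-f(\sigma_{ij}x)$ vanishes, so $T_i^k f=\partial_i f$, and the system $\partial_i f=y_if$ forces $f(x)=e^{\langle x,y\rangle}$, which is $S_N$-invariant only when $y\in\mathbb R\cdot{\bf 1}$. The object your unstated argument actually manipulates is the non-symmetric Dunkl kernel $E_k(\cdot,y)$, which does satisfy $T_i^k E_k(\cdot,y)=y_iE_k(\cdot,y)$ but is not $W$-invariant; the symmetric Bessel function satisfies the eigenvalue relation only for $W$-invariant polynomials, i.e.\ $p(T^k)J_k^A(\cdot,y)=p(y)J_k^A(\cdot,y)$. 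Your proof can be repaired in either of two standard ways without changing the later steps: (i) run the whole argument for $E_k$ (where both the individual eigenvalue equations and the ODE argument are literally valid), then symmetrize, using $J_k^A(x,y)=\frac{1}{N!}\sum_{w\in S_N}E_k(x,wy)$ together with the observation that $\overline{wy}=\bar y$ so the exponential factor $e^{N\bar x\bar y}$ pulls out of the average; or (ii) apply the correct Bessel eigenvalue equation with the $S_N$-invariant polynomial $p(\xi)=\sum_i\xi_i$, which, combined with $T_i^kJ_k^A(\cdot,y)=\partial_iJ_k^A(\cdot,y)$ (all reflection terms vanish on an $S_N$-invariant function), gives $\sum_i\partial_{x_i}J_k^A(x,y)=N\bar y\,J_k^A(x,y)$ directly. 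With either fix, the rest of your argument --- the one-dimensional exponential ODE and the swap-of-arguments step exploiting $\overline{\pi_{{\bf 1}^\perp}(x)}=0$ --- goes through as written.
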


We also need the following limit result for 
 $J_k^A$ for $k\to\infty$ which is a consequence of Corollary 8 of \cite{AM}
on Dunkl kernels for arbitrary root systems. Here, we include a proof
 that is specific to the root system  $A_{N-1}$:

\begin{theorem}\label{limit-Bessel-a}
For $x,y\in {\bf 1}^\perp$, 
\begin{equation}
\lim_{k\to\infty} J_k^A(\sqrt{2k}\cdot x,y)=\exp\Big(\frac{\|x\|^2\|y\|^2}{N(N-1)}\Big)\end{equation}
locally uniformly.
\end{theorem}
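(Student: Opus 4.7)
The plan is to derive and exploit a first-order PDE for the pointwise limit. It is standard that $J_k^A(x,\cdot)$ is a Weyl-symmetric eigenfunction of the Bessel operator $L_A^{(y)}$ from (\ref{def-L-A}), equivalently of the symmetric Dunkl Laplacian $\Delta+2kD$ on $S_N$-invariant functions, with eigenvalue $\tfrac12\|x\|^2$ (resp.\ $\|x\|^2$); here
$$D := \sum_{i=1}^N\Bigl(\sum_{j\ne i}\frac{1}{y_i-y_j}\Bigr)\partial_{y_i}.$$
Substituting $x\mapsto\sqrt{2k}\,x$, setting $\phi_k(y):=J_k^A(\sqrt{2k}\,x,y)$, and dividing by $2k$ yields
$$\frac{1}{2k}\Delta^{(y)}\phi_k + D^{(y)}\phi_k = \|x\|^2\phi_k,\qquad \phi_k(0)=1.$$
Since $x\in{\bf 1}^\perp$, Lemma \ref{decomposition-a} shows that $\phi_k$ depends only on $\pi_{{\bf 1}^\perp}(y)$, so we may regard $\phi_k$ as a Weyl-invariant entire function on ${\bf 1}^\perp$.

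To solve the formal limit $D\phi=\|x\|^2\phi$ I would use the elementary identity
$$\sum_{i\ne j}\frac{y_i}{y_i-y_j}=\binom{N}{2}$$
(pair $i\leftrightarrow j$), which yields $D\bigl(\tfrac12\|y\|^2\bigr)=\tfrac12 N(N-1)$. The rotation-invariant ansatz $\phi(y)=F\bigl(\tfrac12\|y\|^2\bigr)$ reduces the PDE to the first-order ODE $\tfrac12 N(N-1)F'(u)=\|x\|^2 F(u)$ with $F(0)=1$, whose unique solution $F(u)=\exp\bigl(2\|x\|^2 u/(N(N-1))\bigr)$ gives exactly the claimed formula.

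To make the limit passage rigorous I would expand $\phi_k$ into its homogeneous Weyl-invariant Taylor components $\phi_k(y)=\sum_{n\ge 0}Q_n^{(k)}(y)$, with $Q_0^{(k)}=1$ and $Q_1^{(k)}=0$ (the only degree-$1$ $S_N$-invariant is $p_1=\sum y_i$, which vanishes on ${\bf 1}^\perp$); the PDE yields the recursion
$$DQ_{n+2}^{(k)}+\tfrac{1}{2k}\Delta Q_{n+2}^{(k)}=\|x\|^2 Q_n^{(k)}.$$
Combining this with the Baker--Forrester Jack-polynomial expansion of $J_k^A$ and the homogeneity $J_\lambda^{(1/k)}(\sqrt{2k}\,x)=(2k)^{|\lambda|/2}J_\lambda^{(1/k)}(x)$, a scaling analysis as the Jack parameter $\alpha=1/k\to 0$ shows that only partition contributions producing $\|y\|^{2m}$ survive in the limit while all other partitions decay as negative powers of $k$, yielding locally uniform convergence on ${\bf 1}^\perp$.

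The main obstacle is precisely this identification step: the limit PDE by itself does not pin down $\phi$ among Weyl-invariant analytic solutions, because the kernel of $D$ on $S_N$-invariant polynomials is nontrivial already in low degrees (in degree $4$, for instance, the equation $DQ_4=\|x\|^2Q_2$ has a one-parameter family of Weyl-invariant solutions inside the two-dimensional span of $p_4$ and $p_2^2$). The extra input from Jack-polynomial asymptotics, or a normal-families compactness argument based on the bound $0<J_k^A(u,y)\le e^{\max_\sigma\langle u,\sigma y\rangle}$ followed by a PDE bootstrap, is what forces the selection of the rotation-invariant solution $F\bigl(\tfrac12\|y\|^2\bigr)$.
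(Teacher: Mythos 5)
Your PDE-based derivation is illuminating and the identity $D\bigl(\tfrac12\|y\|^2\bigr)=\tfrac12 N(N-1)$ neatly explains where the constant $N(N-1)$ comes from, but you yourself identify the gap that keeps this from being a proof: the limit equation $D\phi=\|x\|^2\phi$ with $\phi(0)=1$ does not single out the rotation-invariant solution among Weyl-invariant analytic functions, so your coefficient recursion $DQ_{n+2}^{(k)}+\tfrac{1}{2k}\Delta Q_{n+2}^{(k)}=\|x\|^2Q_n^{(k)}$ is underdetermined (the left side is a non-injective map from degree-$(n+2)$ to degree-$n$ Weyl-invariants, exactly as your degree-$4$ example shows). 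The normal-families alternative you mention also does not obviously close this gap, since the only a priori bound you cite, $J_k^A(\sqrt{2k}\,x,y)\le e^{\sqrt{2k}\|x\|\,\|y\|}$, blows up in $k$ and hence does not supply the locally uniform bound needed to extract a convergent subsequence.

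The ``extra input from Jack-polynomial asymptotics'' you appeal to is in fact the entire content of the paper's proof. The paper inserts $\sqrt{2k}\,x$ directly into the Baker--Forrester expansion
\[
J_k^A(x,y)=\sum_{\tau}\frac{c_\tau(1/k)}{c'_\tau(1/k)\,(kN)_\tau^{(1/k)}}\,\mathcal P_\tau^{(1/k)}(x)\,\mathcal P_\tau^{(1/k)}(y),
\]
uses the homogeneity $\mathcal P_\tau^{(1/k)}(\sqrt{2k}\,x)=(2k)^{|\tau|/2}\mathcal P_\tau^{(1/k)}(x)$ and the degeneration $\mathcal P_\tau^{(1/k)}\to e_{\tau'}$ as $k\to\infty$, and then tracks the net power of $k$ contributed by each partition. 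The step that replaces your missing uniqueness argument is combinatorial: because $e_1(x)=\sum_i x_i=0$ on ${\bf 1}^\perp$, every partition with some $\tau'_j=1$ contributes zero, which forces $\tau_1=\tau_2$; the surviving leading-order terms are precisely the two-row rectangles $\tau=(n,n)$, and resumming them yields $\exp\bigl(4e_2(x)e_2(y)/(N(N-1))\bigr)=\exp\bigl(\|x\|^2\|y\|^2/(N(N-1))\bigr)$. To complete your proposal you would therefore have to carry out exactly this Jack-polynomial bookkeeping; as written, the selection of the correct solution of the limit PDE is left unproved.
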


\begin{proof}
From \cite{BF} we have
\begin{equation}
J_k^A(x,y)={}_0\mcl{F}_0^{(1/k)}(x,y)=
\sum_{n=0}^\infty\sum_{\tau:l(\tau)\leq N, \>|\tau|=n}
\frac{c_\tau(1/k)}{c_\tau^\prime(1/k)}\frac{\mcl{P}_\tau^{(1/k)}(x)\mcl{P}_\tau^{(1/k)}(y)}{(k N)_\tau^{(1/k)}},
\end{equation}
with $\mcl{P}_\tau^{(\alpha)}(x)$ a Jack polynomial \cite{Ma} and $\tau$ an integer
 partition with dual partition $\tau^\prime$. In general, integer partitions are
 sequences of non-negative integers in non-strictly decreasing order, namely $\tau=(\tau_1,\tau_2,\ldots)$
 with $\tau_i\geq\tau_j$ for every $i<j$. Moreover, the length of the partition, denoted $l(\tau)$, is the
 number of nonzero parts in the partition, and the sum of its parts is denoted $|\tau|$. The dual partition
 $\tau^\prime$ is the partition with parts $\tau_i^\prime$ equal to the number of parts of $\tau$ that are greater
 than or equal to $i$. Finally, the expression $(i,j)\in \tau$ means that both $i\leq l(\tau)$ and $j\leq \tau_i$ 
are satisfied. With this, we can give the definition of all remaining symbols,
\begin{align}
(a)_\tau^{(\alpha)}&=\prod_{i=1}^{l(\tau)}\frac{\Gamma(a-(i-1)/\alpha+\tau_i)}{\Gamma(a-(i-1)/\alpha)},\notag\\
c_\tau(\alpha)&=\prod_{(i,j)\in\tau}(\alpha(\tau_i-j)+\tau_j^\prime-i+1),\notag\\
c_\tau^\prime(\alpha)&=\prod_{(i,j)\in\tau}(\alpha(\tau_i-j+1)+\tau_j^\prime-i).\notag
\end{align}
We rewrite  the generalized Pochhammer symbol as
\begin{equation}
(a)_\tau^{(\alpha)}=\prod_{i=1}^{l(\tau)}\frac{\Gamma(a-(i-1)/\alpha+\tau_i)}{\Gamma(a-(i-1)/\alpha)}=\prod_{(i,j)\in\tau}(a-(i-1)/\alpha+j-1).
\end{equation}
Now we consider the large $k$ limit for the coefficients of the sum,
\begin{align}
\frac{c_\tau(1/k)}{c_\tau^\prime(1/k)(k N)_\tau^{(1/k)}}
&=\prod_{(i,j)\in\tau}\frac{\tau_i-j+k(\tau_j^\prime-i+1)}{(\tau_i-j+1+k(\tau_j^\prime-i))(k(N-i+1)+j-1)}\notag\\
&=\prod_{(i,j)\in\tau:\> i<\tau^\prime_j}\frac{\tau_i-j+k(\tau_j^\prime-i+1)}{(\tau_i-j+1+
k(\tau_j^\prime-i))(k (N-i+1)+j-1)}\notag\\
&\quad \times\prod_{(i,j)\in\tau:\>i=\tau^\prime_j}\frac{\tau_i-j+k}{(\tau_i-j+1)(k (N-i+1)+j-1)}\notag\\
&=\prod_{(i,j)\in\tau:\>i<\tau^\prime_j}\Big(
\frac{1}{k}\frac{\tau_j^\prime-i+1}{(\tau_j^\prime-i)(N-i+1)}+O(k^{-2})\Big)\notag\\
&\quad\times\prod_{(i,j)\in\tau:\> i=\tau^\prime_j}\Big(\frac{1}{(\tau_i-j+1)(N-i+1)}+O(k^{-1})\Big).
\end{align}
Now, recall that the Jack polynomials are homogeneous,
\[\mcl{P}_\tau^{(1/k)}(\sqrt{2k}x)=(2k)^{|\tau|/2}\mcl{P}_\tau^{(1/k)}(x),\]
and that they converge to the elementary symmetric polynomials,
\begin{equation*}
e_n(x)=\sum_{1\leq l_1<\cdots<l_n}\prod_{j=1}^{n}x_{l_j},\ e_\tau(x)=\prod_{j=1}^{l(\tau)}e_{\tau_j}(x),
\end{equation*}
when $k\to\infty$,
\[\lim_{k\to\infty}\mcl{P}_\tau^{(1/k)}(x)=e_{\tau^\prime}(x).\]
Then, we have
\begin{align}
J_k^A(\sqrt{2k}\cdot x,y)&=
\sum_{n=0}^\infty \sum_{\tau: \> l(\tau)\leq N,\> |\tau|=n} 
\Big(\frac{1}{k}\Big)^{|\tau|-\tau_1}\notag\\
&\quad\times\prod_{(i,j)\in\tau:\> i<\tau^\prime_j}\Big(\frac{\tau_j^\prime-i+1}{(\tau_j^\prime-i)(N-i+1)}+O(k^{-1})\Big)\notag\\
&\quad\times\prod_{(i,j)\in\tau:\> i=\tau^\prime_j}\Big(\frac{1}{(\tau_i-j+1)(N-i+1)}+O(k^{-1})\Big)\notag\\
&\quad\times (2k)^{|\tau|/2}e_{\tau^\prime}(x)e_{\tau^\prime}(y).
\end{align}
However, since we have imposed $\sum_{i=1}^Nx_i=e_1(x)=0$, all terms for which any of the $\tau^\prime_j=1$ vanish automatically.
 Consequently, we must have $\tau_1=\tau_2$ for all partitions, and the leading-order terms in $k$ are those with partitions
x $\tau$ of length two with $\tau_1=\tau_2$. Therefore,
\begin{eqnarray}
\lim_{k\to\infty}J_k^A(\sqrt{2k}x,y)&=&\sum_{n=0}^\infty\frac{2^{2n}}{n!N^n(N-1)^n}[e_{2}(x)e_{2}(y)]^n\nonumber\\
&=&\exp\Big[\frac{4 e_{2}(x)e_{2}(y)}{N(N-1)}\Big].
\end{eqnarray}
Now, since
\[0=\Big(\sum_{i=1}^Nx_i\Big)^2=\sum_{i=1}^Nx_i^2+2\sum_{1\leq i<j\leq N}x_ix_j,\]
we have
\[\|x\|^2=-2e_2(x)\]
and a similar relation for $y$. Finally, we obtain
\begin{equation}
\lim_{k\to\infty}J_k^A(\sqrt{2k}x,y)=\exp\Big[\frac{\|x\|^2 \|y\|^2}{N(N-1)}\Big],
\end{equation}
as desired.
\end{proof}

\begin{proof}[Proof of Theorem \ref{clt-main-a-general-x}]
By the definition of the transition kernels $K_t$  in (\ref{density-A}),
 the $K_t$ admit the same
space-time-scaling as Brownian motions. We thus may assume that $t=1$ in the proof without loss of generality.

Moreover, (\ref{bessel-reduction-A}) implies 
 that the kernels 
 $K_t$ are partially translation invariant in the sense that
\begin{equation}\label{translation-invariance-A}
K_t(x+c{\bf 1}, S+c{\bf 1})= K_t(x,S) \quad\quad 
\text{for}\>\> c\in\mathbb R, \> ,t>0, \> x\in C_N^A,\> S\subset C_N^A.
\end{equation}
Thus, without loss of generality, we can add the assumption that the starting point $x\in C_N^A$
satisfies $x\in {\bf 1}^\perp$.

Then,
$X_{1,k}$ has the  density
$$c_k^A e^{-\|x\|^2/2-\|y\|^2/2}\cdot J_k^A(x,y) \cdot w_k^A(y)$$
on $C_N^A$. Hence, $X_{1,k}-\sqrt{2k}\cdot{\bf z}$  has the density
\begin{align}\label{a-density-detail}
f_k^A(y):=c_k^A & e^{-\|x\|^2/2} J_k^A(x,y+\sqrt{2k}\cdot{\bf z} )\\
&\cdot \exp\Bigl(-\|y +\sqrt{2k}\cdot{\bf z}\|^2/2\Bigr) w_k^A(y+\sqrt{2k}\cdot{\bf z})\notag
\end{align}
on the shifted cone $C_N^A-\sqrt{2k}\cdot{\bf z} $ with $f_k^A(y)=0$ elsewhere on $\mathbb R^N$.
 Using the definition of $w_k^A$ we now write this density as
$$f_k^A(y)= \tilde c_k \cdot h_k(y),$$
with $\tilde c_k$ given by (see Appendix~D in \cite{AKM1})
$$\tilde c_k:=e^{-\|x\|^2/2}\Big(\frac{k}{e}\Big)^{kN(N-1)/2}\frac{N!}{(2\pi)^{N/2}} \cdot\prod_{j=1}^{N}\frac{\Gamma(1+k)}{\Gamma(1+jk)} \prod_{m=1}^Nm^{km},$$
which is independent of $y$ (but dependent on  $x, k$), and with
\begin{align}\label{a-density-detail-defh}h_k(y):=&  J_k^A(x,y+\sqrt{2k}\cdot{\bf z} )\cdot\notag\\
  &\cdot \exp\Bigl(-\|y\|^2/2 -\sqrt{2k} \langle y,{\bf z}\rangle +
2k \sum_{i<j}\ln\bigl(1+ \frac{y_i-y_j}{\sqrt{2k}(z_i-z_j)}\bigr)\Bigr)\notag\\
=&  J_k^A(x,y+\sqrt{2k}\cdot{\bf z} )\cdot \exp\Bigl(-\|y\|^2/2-\frac{1}{2}\sum_{i<j}\frac{(y_i-y_j)^2}{(z_i-z_j)^2} + O(k^{-1/2})\Bigr)
\end{align}
for $y\in C_N^A-\sqrt{2k}\cdot{\bf z} $ and $h_k(y)=0$ elsewhere.
The last equality in (\ref{a-density-detail-defh}) follows from the Taylor formula for $\ln(1+x)$
and from Lemma \ref{char-zero-A} precisely
 as in the proof of Eq.~(2.8) of \cite{V}.
Next, we recall that $x\in {\bf 1}^\perp$ (by our assumption) and ${\bf z}\in {\bf 1}^\perp$ 
(because $H_N$ has either even or odd symmetry).
We thus conclude from Lemma \ref{decomposition-a} and
Theorem \ref{limit-Bessel-a} that for all $y\in \mathbb R^N$
\begin{align}\label{limit-besselfunction-a2}
\lim_{k\to\infty} J_{k}^A( x,y+\sqrt{2k}\cdot {\bf z}  )&=
\lim_{k\to\infty} J_{k}^A( x,\sqrt{2k}( {\bf z}+ y/\sqrt{2k}) )\notag\\
&=
 \exp\Bigl( \frac{\|x\|^2 \| {\bf z}\|^2}{N(N-1)}\Bigr)= \exp( \|x\|^2/2)
=: d(x)
\end{align}
where we have used 
\begin{equation}\label{hermite-zero-square-a}
\sum_{k=1}^N z_{k,N}^2= N(N-1)/2
\end{equation}
(see (D.22) in \cite{AKM1}).
In summary, 
\begin{equation}\label{summary-a1}
\lim_{k\to\infty}h_k(y)=d(x)\cdot  \exp\Bigl(-\frac{\|y\|^2}{2}-\frac{1}{2}\sum_{i<j}\frac{(y_i-y_j)^2}{(z_i-z_j)^2}
\Bigr).
\end{equation}

Now let $f\in C_b(\mathbb R^N)$ be a bounded continuous function. We shall show that 
 (\ref{summary-a1}) implies that
\begin{equation}\label{density-a-limit}
\lim_{k\to\infty}\int_{\mathbb R^N} f(y)\cdot h_k(y)\> dy = d(x)
\int_{\mathbb R^N} f(y)\cdot 
 \exp\Bigl(-\frac{\|y\|^2}{2}-\frac{1}{2}\sum_{i<j}\frac{(y_i-y_j)^2}{(z_i-z_j)^2}\Bigr)\> dy.
\end{equation}
For this we use dominated convergence. We consider
 the Taylor polynomial  of $\ln(1+x)$ and notice that by the Lagrange remainder,
\begin{equation}\label{ln-expansion-remainder-a}
\ln\bigl(1+ \frac{y_i\pm y_j}{\sqrt{\beta}(z_i\pm z_j)}\bigr) = \frac{y_i\pm y_j}{\sqrt{\beta}(z_i\pm z_j)}
 -\frac{(y_i\pm y_j)^2}{2\beta(z_i\pm z_j)^2}\cdot w_\pm
\end{equation}
with $w_\pm\in[0,1]$. As in the proof of Theorem 2.2 of \cite{V} we obtain 
from Lemma \ref{decomposition-a} that for all $k>0$
\begin{equation}\label{part-est-a}
0\le h_k(y)\le  J_{k}^A( x,\sqrt{2k}( {\bf z}+ y/\sqrt{2k}) )\cdot e^{-\|y\|^2/2}.
\end{equation}
Next, we estimate  $J_k^A$. For this
we recapitulate from \cite{RV2} that for all root systems and all multiplicities $k\ge0$, the associated Bessel functions $J$ 
satisfy 
$$ 0<J(a,b)\le \exp(\|a\|\cdot\|b\|) \quad\quad\text{for all}\quad\quad a,b\in\mathbb R^N.$$
In particular, 
$$ 0<J_k^A( x,y+\sqrt{2k}\cdot{\bf z})\le 
 \exp(\|x\|\cdot (\|y\|+\sqrt{2k}\cdot\|{\bf z} \|)).$$
This shows that
\begin{equation}\label{est-j-a}
J_k^A(x,y+\sqrt{2k}\cdot {\bf z})\le e^{2\|x\|\cdot \|y\|} \quad\text{for}\>\> k>0,\> \text{and }y\in \mathbb R^N
\>\> \text{with }\>\>  \|y\| \ge \sqrt{2k}\cdot  \| {\bf z} \| .
\end{equation}
On the other hand, if  $\|y\| \le \sqrt{2k}\cdot  \| {\bf z} \|$, then 
 $ y/\sqrt{2k}+ {\bf z}$ is contained in a fixed compact set $C\subset  \mathbb R^N$.
 Therefore we obtain from $x,{\bf z}\in {\bf 1}^\perp$, Lemma \ref{decomposition-a}, and Theorem \ref{limit-Bessel-a} that
\begin{align}
\sup_{y\in\mathbb  R^N, \> k\ge0 :\> \|y\| \le \sqrt{2k}\cdot  \| {\bf z} \|}&J_k^A(x,y+\sqrt{2k}\cdot {\bf z})= \notag\\
&=\sup_{y\in\mathbb  R^N, \> k\ge0 :\> \|y\| \le \sqrt{2k}\cdot  \| {\bf z} \|}J_k^A(x,\pi_{{\bf 1}^\perp}(y)+\sqrt{2k}\cdot {\bf z})\notag\\
&=\sup_{y\in{\bf 1}^\perp, \> k\ge0 :\> \|y\| \le \sqrt{2k}\cdot  \| {\bf z} \|}
J_k^A(x,\sqrt{2k}(\frac{y}{\sqrt{2k}} + {\bf z})) \quad <\quad \infty.
\notag\end{align}
 This estimation, (\ref{est-j-a}), and  
 (\ref{part-est-a}) readily imply that 
 the dominated convergence theorem in (\ref{density-a-limit}) works as claimed.

If we take $f$ in  Eq.~(\ref{density-a-limit}) as the constant $1$, we obtain
that the constants $\tilde c_k$ of  the probability  densities $f_k^A$
tend to
$$\tilde d(x):=\Bigl(d(x)\int_{\mathbb R^N} 
 exp\Bigl(-\frac{\|y\|^2}{2}-\frac{1}{2}\sum_{i<j}\frac{(y_i-y_j)^2}{(z_i-z_j)^2}\Bigr)\> dy\Bigr)^{-1}$$
which can be expressed explicitly in terms of $\det S_N$.
On the other hand, it follows from the proof of Theorem 2.2 in \cite{V} (see in particular Eqs.~(2.3)-(2.5) for the case $x=0$) that in our generalized case
$$\lim_{k\to\infty} \tilde c_k= e^{-\|x\|^2/2}\frac{\sqrt{N!}}{(2\pi)^{N/2}}.$$
A comparison of both limits shows that $\det S_N=N!$ as shown in Corollary 2.3 of \cite{V},
 and that the constants depending on $x$ also fit. 

If we take this convergence of the norming constants into account, we obtain from (\ref{density-a-limit})
 that 
the probability measures  $f_k^A(y)\> dy$ tend weakly to the normal distribution 
$N(0, \Sigma_N)$.
This completes the proof.
\end{proof}

We denote by $M^1(S)$ the set of probability distributions on a set $S$, and by $\mu_t$ the scaling of $\mu\in M^1(S)$ by a factor of $\sqrt{t}$, namely, $\mu_t(\{x\}):=t^{N/2}\mu(\{x\cdot\sqrt{t}\})$.

\begin{corollary}\label{clt-main-a-general}
Let $\mu\in  M^1( C_N^A)$ be an arbitrary starting distribution  on $C_N^A$.
Consider the  Bessel processes $(X_{t,k})_{t\ge0}$ of type $A_{N-1}$ on $C_N^A$ for $k\ge0$ with this starting
 distribution $\mu$.
Then
$$\frac{X_{t,k}}{\sqrt t} -  \sqrt{2k}\cdot (z_{1,N},\ldots,z_{N,N})$$
converges for $k\to\infty$ to the  $N$-dimensional distribution $\pi_{{\bf 1}}(\mu_t)*N(0, \Sigma_N)$
with the normal distribution $N(0, \Sigma_N)$,
the covariance matrix $\Sigma_N$ as in Theorem \ref{clt-main-a}, and
 the usual convolution $*$ of probability measures on $\mathbb R^N$, where  $\pi_{{\bf 1}}(\mu_t)$ is the image measure
of $\mu_t$ under the projection  $\pi_{{\bf 1}}$.
\end{corollary}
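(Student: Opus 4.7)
The corollary will follow from Theorem \ref{clt-main-a-general-x} by conditioning on the starting point and integrating against $\mu$. For any bounded continuous $f\in C_b(\mathbb R^N)$ and every $k\ge 0$, the Markov property gives
\begin{equation*}
E_\mu\Bigl[f\Bigl(\tfrac{X_{t,k}}{\sqrt t}-\sqrt{2k}\cdot\mathbf z\Bigr)\Bigr]=\int_{C_N^A}E_x\Bigl[f\Bigl(\tfrac{X_{t,k}}{\sqrt t}-\sqrt{2k}\cdot\mathbf z\Bigr)\Bigr]\,d\mu(x),
\end{equation*}
where $E_x$ denotes the expectation under the process started at $x$. Theorem \ref{clt-main-a-general-x} tells us that the inner expectation converges pointwise as $k\to\infty$ to $\int_{\mathbb R^N}f\,dN(\pi_{\mathbf 1}(x/\sqrt t),\Sigma_N)$.

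Since $|E_x[f(\cdots)]|\le\|f\|_\infty$ uniformly in $x$ and $k$ and $\mu$ is a probability measure, dominated convergence yields
\begin{equation*}
\lim_{k\to\infty}E_\mu\Bigl[f\Bigl(\tfrac{X_{t,k}}{\sqrt t}-\sqrt{2k}\cdot\mathbf z\Bigr)\Bigr]=\int_{C_N^A}\int_{\mathbb R^N}f\bigl(\pi_{\mathbf 1}(x/\sqrt t)+z\bigr)\,dN(0,\Sigma_N)(z)\,d\mu(x).
\end{equation*}
It remains to identify the right-hand side with $\int f\,d(\pi_{\mathbf 1}(\mu_t)\ast N(0,\Sigma_N))$. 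By the defining scaling $\mu_t(\{x\})=t^{N/2}\mu(\{x\sqrt t\})$, the measure $\mu_t$ is precisely the push-forward of $\mu$ under $x\mapsto x/\sqrt t$, so its further push-forward under $\pi_{\mathbf 1}$ equals $\pi_{\mathbf 1}(\mu_t)$. A Fubini rearrangement then recognises the double integral as $E[f(Y+Z)]$ with $Y\sim\pi_{\mathbf 1}(\mu_t)$ and $Z\sim N(0,\Sigma_N)$ independent, which is exactly the integral of $f$ against the convolution.

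\textbf{Where the work is.} There is essentially no substantive new obstacle beyond Theorem \ref{clt-main-a-general-x}: the argument is dominated convergence combined with a change of variable and Fubini. Measurability of $x\mapsto E_x[f(\cdots)]$ in the first step is automatic from the explicit form of the transition kernel (\ref{density-A}) together with the joint continuity of $J_k^A$, and the uniform bound $\|f\|_\infty$ removes any need for further control on the densities $f_k^A$ already analysed in the proof of Theorem \ref{clt-main-a-general-x}. The only point requiring a small amount of care is the bookkeeping in the last step, namely the correct use of the convention for $\mu_t$ and the verification that projecting first and then convolving matches the direct Fubini expression above.
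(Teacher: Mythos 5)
Your proof is correct and matches the approach the paper points to: the paper's proof is a two-line reference to the proof of Corollary 3.7 in \cite{V}, which is precisely the conditioning-plus-dominated-convergence argument you carry out. Your unpacking of the Markov-property conditioning, the $\|f\|_\infty$ bound for dominated convergence, and the identification of $\mu_t$ as the pushforward of $\mu$ under $x\mapsto x/\sqrt t$ followed by the Fubini step is the intended argument spelled out in full.
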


\begin{proof} If $\mu$ is a Dirac measure, say at $x\in C_N^A$, then the statement is precisely Theorem
\ref{clt-main-a}. This then leads easily to the general case; see the proof of Corollary 3.7 in \cite{V}.
\end{proof}

\section{The covariance matrices in the $A$-case}

We now study the matrices $S_N=\Sigma_N^{-1}$ from Theorems   \ref{clt-main-a} and \ref{clt-main-a-general-x}
more closely. We first determine the eigenvalues and eigenvectors.
The  eigenvectors will be described in terms of a certain finite sequence of orthogonal polynomials.
For this we introduce the empirical measures
\begin{equation}\label{empirical-measure-a}
\mu_N:=\frac{1}{N}(\delta_{z_{1,N}}+\ldots+\delta_{z_{N,N}})\in M^1(\mathbb R)
\end{equation}
of the zeros of $H_N$. We consider the associated finite sequence of orthogonal polynomials 
$\{P_n^{(N)}\}_{n=0}^{N-1}$ with positive leading coefficients and with the normalizations
\begin{equation}\label{normalization-ops-a}
\sum_{i=1}^N P_n^{(N)}(z_{i,N})^2=1 \quad\quad(n=0,\ldots, N-1).
\end{equation}
 These polynomials with $\text{deg} [P_n^{(N)}]=n$ ($n=0,\ldots,N-1$)
 are determined uniquely by Gram-Schmidt orthogonalization and normalization from the monomials $x^n$ 
 ($n=0,\ldots,N-1$) on the spaces $L^2(\mathbb R, \mu_N)$. For the  background on finite 
 sequences of orthogonal polynomials we refer to \cite{C}.
 These orthogonal polynomials satisfy a three-term recurrence relation (see \cite{C}, Section~I.4).
The normalization (\ref{normalization-ops-a}) and the orthogonality of the polynomials $P_n^{(N)}$  ensure
 that for $N\in\mathbb N$ the matrices
\begin{equation}\label{trafo-matrix-a}
T_N:= ( P_{j-1}^{(N)}(z_{i,N}))_{i,j=1,\ldots,N}
\end{equation}
are orthogonal.
 In particular,
$$  P_0^{(N)}\equiv N^{-1/2}, \quad  P_1^{(N)}(x) = \sqrt{\frac{2}{N(N-1)}}x,$$
and
$$ P_2^{(N)}(x) = c_2(x^2-(N-1)/2),\quad c_2=\frac{2}{\sqrt{N(N-1)(N-2)}}.$$
The expressions for $P_1^{(N)}(x)$ and $P_2^{(N)}(x)$ follow from orthogonality and from \eqref{hermite-zero-square-a}. 

We have the following result about the eigenvalues and eigenvectors of $S_N$:

\begin{theorem}\label{ev-a}
For each $N\ge2$, the matrix $S_N$ from Theorem \ref{clt-main-a} has the eigenvalues $1,2,\ldots,N$.
Moreover, for each $n=1,\ldots,N$, the vector
$$\bigl(P_{n-1}^{(N)}(z_{1,N}), \ldots, P_{n-1}^{(N)}(z_{N,N})\bigr)^T$$
is an eigenvector of $S_N$ for the eigenvalue $n$, i.e., $S_N= T_N \cdot \textup{diag}(1,2,\ldots,N) \cdot T_N^T.$
\end{theorem}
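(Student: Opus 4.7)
The plan is to identify the Euclidean space $\mathbb{R}^N$ with the quotient $\mathbb{R}[x]/(H_N)$ of polynomials modulo $H_N$ via Lagrange interpolation at the Hermite zeros: a vector $v \in \mathbb{R}^N$ corresponds to the unique polynomial $f$ of degree at most $N-1$ with $f(z_{i,N}) = v_i$. Under this identification, the matrix $S_N$ from \eqref{covariance-matrix-A} acts as the linear operator
\[
(\mathcal{L} f)(z_{i,N}) = f(z_{i,N}) + \sum_{l \ne i} \frac{f(z_{i,N}) - f(z_{l,N})}{(z_{i,N} - z_{l,N})^2}.
\]
The cases $n = 1, 2$ are immediate: $\mathcal{L}\mathbf{1} = \mathbf{1}$ because $\sum_l s_{i,l} = 1$, and Lemma~\ref{char-zero-A}(2) gives $\mathcal{L} x = 2x$ on the zero set, so $(1, \ldots, 1)^T$ and $(z_{1,N}, \ldots, z_{N,N})^T$ are eigenvectors with eigenvalues $1$ and $2$, matching $P_0^{(N)}$ and $P_1^{(N)}$ up to the prescribed normalization.

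The key step is to show that $\mathcal{L}$ is upper triangular in the monomial basis $\{1, x, \ldots, x^{N-1}\}$ with diagonal $1, 2, \ldots, N$. Taylor-expanding $f(z_{l,N})$ around $z_{i,N}$ produces
\[
\sum_{l \ne i} \frac{f(z_{i,N}) - f(z_{l,N})}{(z_{i,N} - z_{l,N})^2} = f'(z_{i,N}) \sum_{l \ne i} \frac{1}{z_{i,N} - z_{l,N}} - \sum_{k \ge 2} \frac{f^{(k)}(z_{i,N})}{k!} \sum_{l \ne i} (z_{l,N} - z_{i,N})^{k-2}.
\]
By Lemma~\ref{char-zero-A}(2) the first term equals $z_{i,N} f'(z_{i,N})$, which for $f(x) = x^m$ is $m\, z_{i,N}^m$. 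For each $k \ge 2$, the sum $\sum_{l \ne i}(z_{l,N} - z_{i,N})^{k-2}$ is a polynomial in $z_{i,N}$ of degree at most $k-2$, with coefficients given by the power sums of the Hermite zeros (constants independent of $i$). Multiplied by the degree-$(m-k)$ polynomial $f^{(k)}(z_{i,N})/k!$, it contributes degree at most $m-2$. Collecting,
\[
(\mathcal{L} x^m)(z_{i,N}) = (m+1)\, z_{i,N}^m + R_m(z_{i,N}),
\]
where $R_m$ is a polynomial of degree at most $m-2$. This yields the desired upper-triangular structure with simple diagonal eigenvalues $1, 2, \ldots, N$.

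The argument then concludes via symmetry. Since $S_N$ is symmetric, eigenvectors for distinct eigenvalues are orthogonal in the Euclidean inner product on $\mathbb{R}^N$, which under our identification reads $\langle f, g \rangle = \sum_i f(z_{i,N})\, g(z_{i,N}) = N \int f g \, d\mu_N$. Upper-triangularity forces the eigenvector $Q_{n-1}$ for eigenvalue $n$ to have degree exactly $n-1$: it sits in the $\mathcal{L}$-invariant subspace $\mathrm{span}\{1, \ldots, x^{n-1}\}$ but not in $\mathrm{span}\{1, \ldots, x^{n-2}\}$, where $\mathcal L$ has spectrum $\{1,\ldots,n-1\}$. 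Hence $(Q_n)_{n=0}^{N-1}$ is an orthogonal polynomial sequence for $\mu_N$, and the normalization \eqref{normalization-ops-a} together with the positive-leading-coefficient convention identifies $Q_n = P_n^{(N)}$. The $n$th column of $T_N$ is therefore the unit eigenvector for eigenvalue $n$; orthonormality of the columns makes $T_N$ orthogonal, and the factorization $S_N = T_N \cdot \mathrm{diag}(1, 2, \ldots, N) \cdot T_N^T$ follows at once.

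The main technical obstacle I anticipate is the degree-counting in the key step — specifically, ensuring that each residual contribution $f^{(k)}(z_{i,N})\sum_l(z_{l,N}-z_{i,N})^{k-2}/k!$ really has degree at most $m-2$ in $z_{i,N}$ (never $m-1$), so that the diagonal entries come out to be the clean integers $1, 2, \ldots, N$ rather than a perturbed pattern. Once this is settled, everything else is formal, following from the symmetry of $S_N$ and the uniqueness of orthonormal polynomials with respect to $\mu_N$.
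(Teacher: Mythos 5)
Your proposal is correct and follows essentially the same route as the paper: both rest on showing that $S_N$ sends $x^m$ (evaluated at the Hermite zeros) to $(m+1)x^m$ plus a polynomial of degree at most $m-2$ — using Lemma~\ref{char-zero-A}(2) at the key step — and then identify the eigenvectors with the $P_{n-1}^{(N)}$ via the symmetry of $S_N$ and orthogonality with respect to $\mu_N$. Your Taylor-expansion-at-$z_{i,N}$ computation and the upper-triangularity framing are a somewhat cleaner packaging of the paper's telescoping identity and explicit induction on $n$ (where the eigenvector for eigenvalue $n$ is built as $x^{n-1}$ minus a correction supplied by the induction hypothesis), but the underlying algebra is the same.
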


\begin{proof}
In the first main step of the proof we show by induction on $n=1,\ldots,N$ that $n$ is an eigenvalue
of $S_N$, and that there exists some polynomial $q_n$ of degree $n-1$ such that the vector
$$(q_n(z_{1,N}), \ldots, q_n(z_{N,N}))^T$$
is an associated eigenvector of  $S_N$. In a short second step we then will identify the polynomials $q_n$.

We start our induction with $n=1$. We observe that $(1,\ldots,1)^T$ is clearly an eigenvector
for the eigenvalue $1$. 
Moreover, if we use Lemma \ref{char-zero-A}(2), we also see easily that $(z_{1,N},\ldots,z_{N,N})^T$ is an
eigenvector for the eigenvalue $2$. 
It can be also checked with this argument and an easy computation that
$$( P_2^{(N)}(z_{1,N}) , \ldots, P_2^{(N)}(z_{N,N}))^T$$  as given above is an
eigenvector for the eigenvalue $3$. 

Let us turn to the general induction step for $n$. We use the $N\times N$-identity matrix $I_N$
 and consider the vector
$$v_n:=(z_{1,N}^{n-1} , \ldots,z_{N,N}^{n-1})^T.$$ Then the $i$-th coordinate of $(S_N-nI_N)v_n$ satisfies
\begin{align}\label{rek-a1}
((S_N&-nI_N)v_n)_i= (1-n)z_{i,N}^{n-1} + \sum_{j:\> j\ne i} \frac{z_{i,N}^{n-1}- z_{j,N}^{n-1}}{(z_{i,N}- z_{j,N})^2}
\\
&=(1-n)z_{i,N}^{n-1} + \sum_{j:\> j\ne i} \frac{z_{i,N}^{n-2}+ z_{i,N}^{n-3}z_{j,N}+\ldots+z_{j,N}^{n-2}}{z_{i,N}- z_{j,N}}
\notag\\
&=(1-n)z_{i,N}^{n-1} +(n-1)z_{i,N}^{n-2} \sum_{j:\> j\ne i}  \frac{1}{z_{i,N}- z_{j,N}}+
\notag\\
&+  \sum_{j:\> j\ne i} \frac{ z_{i,N}^{n-3}(z_{j,N}-z_{i,N}) + z_{i,N}^{n-4}(z_{j,N}^2-z_{i,N}^2)+\ldots+
1\cdot(z_{j,N}^{n-2}-z_{i,N}^{n-2})}{z_{i,N}- z_{j,N}}
\notag\\
&= -\sum_{m=1}^{n-2}\sum_{l=0}^{m-1}z_{i,N}^{n-3-l}\Big(\sum_{j=1}^Nz_{j,N}^l-z_{i,N}^l\Big),
\notag\end{align}
where the last equation follows from item (2) of Lemma \ref{char-zero-A}. If we put 
$$s_l:=\sum_{j=1}^N z_{j,N}^l \quad\quad(l=0,1,\ldots),$$
we notice that $s_l=0$ whenever $l$ is odd due to the symmetry of the zeroes of $H_N$, and we obtain
\begin{align}\label{rek-a2}
((S_N&-nI_N)v_n)_i= \sum_{m=1}^{n-2}\sum_{l=0}^{m-1}z_{i,N}^{n-3}-\sum_{l=0}^{n-3}\sum_{m=l+1}^{n-2}s_l z_{i,N}^{n-3-l}
\notag\\
&=\frac{(n-1)(n-2)}{2}z_{i,N}^{n-3}-\sum_{l=0}^{\lfloor (n-3)/2\rfloor}(n-2(l+1))s_{2l}z_{i,N}^{n-1-2(l+1)}
\notag\\
&=-\Big(N-\frac{n-1}{2}\Big)(n-2)z_{i,N}^{n-3}-\sum_{l=1}^{\lfloor (n-3)/2\rfloor}(n-2(l+1))s_{2l}z_{i,N}^{n-1-2(l+1)},
\end{align}
which is a polynomial with all terms either even or odd in $z_{i,N}$. Note that it is easy to confirm that
\begin{equation}
s_{2l}=\frac{1}{2}\sum_{m=0}^{l-1}s_{2(l-1-m)}s_{2m}-\frac{2l-1}{2}s_{2(l-1)}
\end{equation}
with $s_0=N$, meaning that the coefficients $s_l$ are functions of $N$ alone. We thus find a polynomial $r_{n-3}$ of order $n-3$ with
\begin{equation}\label{rek-a3}
(S_N-nI_N)v_n= (r_{n-3}(z_{1,N}),\ldots, r_{n-3}(z_{N,N}))^T.
\end{equation}
On the other hand, by our induction assumptions, we have polynomials $q_1,\ldots,q_{n-2}$ with $\text{deg} [q_l]=l-1$
($l=1,\ldots, n-2$) and
\begin{equation}\label{rek-a4}
(S_N-nI_N) (q_l(z_{1,N}),\ldots, q_l(z_{N,N}))^T= -(n-l)\cdot (q_l(z_{1,N}),\ldots, q_l(z_{N,N}))^T.
\end{equation}
As the $q_1,\ldots,q_{n-2}$ form a basis of the vector space $\mathbb R_{n-3}[x]$ of
 all polynomials of degree at most $n-3$, we can find a polynomial $p_{n-3}\in \mathbb R_{n-3}[x]$ that satisfies
\begin{equation}\label{rek-a5}
(S_N-nI_N) (p_{n-3}(z_{1,N}),\ldots, p_{n-3}(z_{N,N}))^T=(r_{n-3}(z_{1,N}),\ldots, r_{n-3}(z_{N,N}))^T .
\end{equation}
Therefore, the monic polynomial $q_n(x):= x^{n-1}-p_{n-3}(x)$ has the required properties. 
This completes the induction.

We finally identify the $q_n$ more explicitly. As $S_N$ is symmetric, the vectors 
$$(q_n(z_{1,N}),\ldots,q_n(z_{N,N}))^T \quad\quad(n=1,\ldots,N)$$
are orthogonal, i.e.,
$$\sum_{i=1}^N q_n(z_{i,N})\cdot q_l(z_{i,N})=0 \quad\quad(n,l=1,\ldots,N, \> n\ne l).$$
Hence, $(q_n)_{n=1,\ldots,N}$ is just a finite sequence of orthogonal
polynomials associated with the empirical measure $\mu_N$. This 
implies that the $q_n$ are equal to $P_{n-1}^{(N)}$ for $n=1,\ldots,N$ up to normalizations.
This completes the proof of the theorem.
\end{proof}

\begin{remark}\label{comparison-de}
The CLT \ref{clt-main-a} was also derived by Dumitriu and Edelman \cite{DE2} for $t=1$.
We point out that their statement contains explicit formulas for
the covariance matrix $\Sigma_N=(\sigma_{i,j}^2)_{i,j=1,\ldots,N}$ of the limit and not its inverse
$S_N=\Sigma_N^{-1}$ as in \eqref{clt-main-a}. In fact, in our notations, Theorem 3.1 of  \cite{DE2}
yields that
\begin{equation}\label{covariance-a-de}
\sigma_{i,j}^2= \frac{ \sum_{l=0}^{N-1} \tilde H_l^2(z_{i,N}) \tilde H_l^2(z_{j,N})
+ \sum_{l=0}^{N-2}\tilde H_{l+1}(z_{i,N}) \tilde H_l(z_{i,N}) \tilde H_{l+1}(z_{j,N})\tilde H_{l}(z_{j,N})}{
\sum_{l=0}^{N-1}\tilde H_l^2(z_{i,N}) \cdot \sum_{l=0}^{N-1}\tilde H_l^2(z_{j,N}) }
\end{equation}
with the \textit{orthonormal} Hermite polynomials $(\tilde H_n)_{n\ge0}$.  Theorem \ref{ev-a} and a comparison of Theorem \ref{clt-main-a} above with  Theorem 3.1 of  \cite{DE2} show that the matrix  $\Sigma_N$ as in
 (\ref{covariance-a-de}) has the form
\begin{equation}\label{covar-matrix-ortho}
 \Sigma_N= T_N \cdot \text{diag}(1,1/2,\ldots,1/N) \cdot T_N^T.
\end{equation}
Even knowing these facts, we are unable to check this statement for general dimensions $N$ 
directly via  (\ref{covariance-a-de})
even in the simplest cases like the eigenvalue $1$ with eigenvector $(1,\ldots,1)^T$.
\end{remark}

Next, we study the polynomials $P_k^{(N)}$ more closely for large dimensions $N$.
 We recapitulate the well-known fact (see e.g. \cite{G}, \cite{KM}, or \cite{D} for different proofs)
 that for $\mathbb R$-valued random variables
 $X_N$ with distributions $\mu_N$, the r.v.'s 
$\frac{1}{\sqrt{2 N}}X_N$ tend in distribution to  the r.v. $X$ which obeys the semicircle law $\mu_{sc}$, namely, the probability measure given by the density
$$\rho_{sc}(x)= \frac{2}{\pi} \sqrt{1-x^2}\cdot {\bf 1}_{[-1,1]}(x).$$
For this we recall that the odd moments of  $\mu_{sc}$  are zero while
 for $n\in\mathbb N$, the $2n$-th moments are given by
$2^{-2n}C_n$ with the Catalan numbers 
\[C_n=\frac{1}{n+1} \cdot\binom{2n}{n} \quad\quad (n\ge0);\]
see e.g.~\cite{D} or \cite{G}.
The convergence of the $\mu_N$ to  $\mu_{sc}$ above  can now be derived via the moment convergence theorem \cite{FS}.
 In fact, the following rate of convergence for the moments was given in Theorem 2 of \cite{KM}; please notice that 
\cite{KM} use a different normalization for the Hermite polynomials in their arguments.
 We have translated their results to our setting:

\begin{proposition}\label{prop-rate-moments-a}
For all $n\in\mathbb N_0$ the $n$-th moment
$$m_N(n):=E(X_N^n)= \frac{1}{N}\sum_{i=1}^N z_{i,N}^n$$
of  a random variable $X_N$ with the distribution $\mu_N$ in \eqref{empirical-measure-a}, satisfies
$$m_N(n)=\left\{\begin{array}{r@{\quad\quad}l}
(N/2)^{n/2}C_{n/2} + \frac{1}{N}\cdot f_n(N) &\text{for}\quad  n \quad \text{even}\\
0 &\text{for}\quad  n \quad \text{odd} \end{array}\right.$$
 with polynomials $f_n$ of degree at most $n/2$.
\end{proposition}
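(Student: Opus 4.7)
The vanishing of the odd moments is immediate by symmetry: since $H_N$ is an even or odd polynomial according to the parity of $N$, its zeros are symmetric about the origin, so $\sum_{i=1}^{N} z_{i,N}^{n}=0$ for every odd $n$. The content of the proposition is therefore the statement for even $n=2k$.

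My plan is to exploit the recurrence
$$s_{2k} = \frac{1}{2}\sum_{m=0}^{k-1} s_{2(k-1-m)}\,s_{2m} - \frac{2k-1}{2}\,s_{2(k-1)}$$
already established (and checked via Lemma \ref{char-zero-A}(2)) in the proof of Theorem \ref{ev-a}, where $s_{2k}=N\cdot m_N(2k)$ and $s_0=N$. Dividing this identity through by $N$ gives the equivalent recurrence for the moments,
$$m_N(2k) = \frac{N}{2}\sum_{m=0}^{k-1} m_N(2m)\,m_N(2(k-1-m)) - \frac{2k-1}{2}\,m_N(2(k-1)),$$
with $m_N(0)=1$. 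This identity is well suited to a short induction on $k$.

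By induction I would then show that $m_N(2k)$ is a polynomial in $N$ of degree at most $k$ whose leading coefficient equals $C_k/2^k$. The degree bound follows because, by the inductive hypothesis, each product $m_N(2m)\,m_N(2(k-1-m))$ is a polynomial of total degree at most $k-1$, so multiplication by $N/2$ raises the degree to at most $k$, while the subtracted term has degree at most $k-1$. Extracting the coefficient of $N^{k}$ on both sides of the recurrence yields the Catalan recursion $c_k=\tfrac{1}{2}\sum_{m=0}^{k-1} c_m\,c_{k-1-m}$ for the leading coefficients $c_k$, which together with $c_0=1$ forces $c_k=C_k/2^k$, reproducing the expected semicircle leading term $(N/2)^{k}C_k$.

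Consequently, $m_N(2k) - (N/2)^{k}C_k$ is a polynomial in $N$ of degree at most $k-1$, so setting $f_{2k}(N) := N\bigl(m_N(2k)-(N/2)^{k}C_k\bigr)$ produces a polynomial in $N$ of degree at most $k = (2k)/2$, as required. The only point that really needs care is the bookkeeping to verify that the Catalan recursion emerges at leading order; everything else is essentially automatic once the power-sum recurrence of Theorem \ref{ev-a} is rewritten in terms of moments.
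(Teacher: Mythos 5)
Your proof is correct, but it takes a genuinely different route from the paper: the paper establishes Proposition~\ref{prop-rate-moments-a} by citing Theorem~2 of Kornyik--Michaletzky~\cite{KM} and translating their normalization, whereas you give a self-contained induction based on the power-sum recurrence that already appears (unproved, as ``easy to confirm'') inside the proof of Theorem~\ref{ev-a}. Your inductive bookkeeping is sound: dividing the $s_{2l}$ recurrence by $N$ does give
\[
m_N(2k)=\frac{N}{2}\sum_{m=0}^{k-1}m_N(2m)\,m_N(2(k-1-m))-\frac{2k-1}{2}\,m_N(2(k-1)),\qquad m_N(0)=1,
\]
the degree bound propagates exactly as you say, and extracting the $N^{k}$ coefficient reproduces the Catalan recursion $C_k=\sum_{m=0}^{k-1}C_mC_{k-1-m}$ for the rescaled leading coefficients $c_k=C_k/2^k$. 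In fact your argument yields a small bonus the proposition does not state: by induction $s_{2k}$ is divisible by $N$, so $m_N(2k)$ is a genuine polynomial in $N$, which forces $f_{2k}(0)=0$ in addition to $\deg f_{2k}\le k$.

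Two remarks. First, to make your proof fully self-contained you should actually verify the $s_{2l}$ recurrence rather than just invoke it, since the paper only asserts it; the verification is the standard manipulation $s_{2k}=\sum_i z_i^{2k-1}z_i=\sum_i z_i^{2k-1}\sum_{j\ne i}(z_i-z_j)^{-1}=\tfrac12\sum_{i\ne j}\tfrac{z_i^{2k-1}-z_j^{2k-1}}{z_i-z_j}$ using Lemma~\ref{char-zero-A}(2), followed by expansion of the divided difference and the identity $\sum_{i\ne j}z_i^{m}z_j^{2k-2-m}=s_m s_{2k-2-m}-s_{2k-2}$. Second, the trade-off is clear: the paper's citation is shorter and makes contact with known literature on Hermite zero moments; your route is elementary, avoids the normalization-translation step, and reuses machinery already present in Section~3, which is arguably cleaner in context.
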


This proposition ensures that for all $n$,
\begin{equation}\label{rate-moments-a}
E\Bigl(\Bigl(\frac{1}{\sqrt{2 N}}X_N\Bigr)^n\Bigr)- E(X^n)=O(1/N) \quad\quad(N\to\infty).
\end{equation}

We now equip the vector space $\mathbb R[x]$ of all polynomials with the  positive semidefinite products
$$\langle p,q\rangle_N:= \frac{1}{N}\sum_{i=1}^N p\Big(\frac{1}{\sqrt{2 N} }z_{i,N}\Big)\cdot  q\Big(\frac{1}{\sqrt{2 N} }z_{i,N}\Big)
\quad\quad(N\in\mathbb N)$$
and
$$\langle p,q\rangle:= \frac{2}{\pi} \int_{-1}^1 \sqrt{1-x^2}\cdot p(x)q(x)\> dx= \int_{-1}^1   pq\> d\mu_{sc}$$
and study the associated orthonormal polynomials.
In the first case,  the normalization (\ref{normalization-ops-a}) shows  that these
orthonormal polynomials  $(\tilde P_n^{(N)})_{n=0,\ldots,N-1}$ satisfy
\begin{equation}\label{connection-norm-a}
 \tilde P_n^{(N)}(x) = \sqrt N \cdot  P_n^{(N)}(\sqrt{2N} \cdot x)  \quad\quad (n=0,\ldots,N-1).
\end{equation}
Moreover, by Section 4.7 of \cite{S}, 
in the second case the  orthonormal polynomials
 are the Tchebychev polynomials $(U_n)_{n\ge0}$ of the second kind with
\begin{equation}\label{def-t2}
U_n(\cos\theta)=\frac{\sin((n+1)\theta)}{\sin \theta} \quad\quad (n\in\mathbb N_0).
\end{equation}
 Proposition \ref{prop-rate-moments-a} yields:

\begin{lemma}\label{lemma-rate-polynomials-a}
For all $n\in\mathbb N_0$, and locally uniformly in $x\in\mathbb R$,
$$  \tilde P_n^{(N)}(x) - U_n(x) =O(1/N) \quad\quad(N\to\infty).$$
\end{lemma}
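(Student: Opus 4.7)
The plan is to realize both $\tilde P_n^{(N)}$ and $U_n$ as the outputs of Gram--Schmidt orthonormalisation of $\{1,x,\ldots,x^n\}$ with respect to two inner products whose moment data differ by $O(1/N)$, and then to deduce the estimate from smooth dependence of Gram--Schmidt on finitely many moments.

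First I would set $\tilde m_N(j) := \langle x^j,1\rangle_N = \frac{1}{N}\sum_{i=1}^N (z_{i,N}/\sqrt{2N})^j$ and $M_{sc}(j) := \int_{-1}^{1}x^j\, d\mu_{sc}(x)$. Equation (\ref{rate-moments-a}) gives $\tilde m_N(j)-M_{sc}(j)=O(1/N)$ for every fixed $j$. Fix $n$ and form the Hankel matrices $H_k^{(N)}:=(\tilde m_N(i+j))_{i,j=0}^{k}$ and $H_k:=(M_{sc}(i+j))_{i,j=0}^{k}$ for $0\le k\le n$. Since $\mu_{sc}$ has infinite support, each $H_k$ is positive definite; combined with entrywise $O(1/N)$ convergence, this forces the Hankel determinants $D_k^{(N)}:=\det H_k^{(N)}$ to be positive and bounded away from zero, uniformly in $k\le n$ for all sufficiently large $N$.

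Next I would apply the classical Heine determinantal formula for orthonormal polynomials with positive leading coefficient: for each $k$ with $1\le k\le n$,
\begin{equation*}
\tilde P_k^{(N)}(x)=\frac{1}{\sqrt{D_{k-1}^{(N)}\,D_k^{(N)}}}\det\begin{pmatrix}
\tilde m_N(0) & \tilde m_N(1) & \cdots & \tilde m_N(k)\\
\vdots & \vdots & & \vdots\\
\tilde m_N(k-1) & \tilde m_N(k) & \cdots & \tilde m_N(2k-1)\\
1 & x & \cdots & x^{k}
\end{pmatrix},
\end{equation*}
together with the identical formula for $U_k$ with $M_{sc}$ in place of $\tilde m_N$ (the base case $k=0$ is trivial since $\tilde m_N(0)=1=M_{sc}(0)$, giving $\tilde P_0^{(N)}=U_0=1$). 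Expanding along the last row, each coefficient of $\tilde P_k^{(N)}$ becomes a rational function of the finitely many moments $\tilde m_N(0),\ldots,\tilde m_N(2k)$ whose denominator $\sqrt{D_{k-1}^{(N)}D_k^{(N)}}$ stays bounded away from zero for large $N$. Smoothness of the quotient then propagates the $O(1/N)$ perturbation of the moments to an $O(1/N)$ perturbation of each coefficient, which upgrades to $\tilde P_n^{(N)}(x)-U_n(x)=O(1/N)$ uniformly on every compact subset of $\mathbb R$.

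The only non-trivial step is the uniform lower bound on $D_k^{(N)}$, but as noted this is automatic from the entrywise convergence $H_k^{(N)}\to H_k$ and the positive definiteness of the limit $H_k$; no delicate estimate is required. Everything else is continuity of the Heine formula viewed as a map from moment data to polynomial coefficients.
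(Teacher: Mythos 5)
Your argument is correct, and it proves exactly what is claimed; it just takes a different explicit route to the smooth dependence on moments than the paper does. The paper's proof proceeds by induction on $n$ via the three-term recurrence relation for the monic orthogonal polynomials associated with $\tilde P_n^{(N)}$ and $U_n$: the recurrence coefficients are rational functions of finitely many moments, so the $O(1/N)$ rate from Proposition~\ref{prop-rate-moments-a} propagates step by step, and the final orthonormalizations (again ratios involving moments) preserve the rate. Your proof bypasses the recurrence entirely and goes through the Heine determinantal representation, reading off each coefficient of $\tilde P_k^{(N)}$ as a rational function of the moments $\tilde m_N(0),\ldots,\tilde m_N(2k)$ with denominator $\sqrt{D_{k-1}^{(N)}D_k^{(N)}}$ bounded away from zero for large $N$ by positive definiteness of the limiting semicircle Hankel matrices. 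Both routes ultimately rest on the same two facts — the $O(1/N)$ rate for the moments and nonvanishing of Hankel determinants — and both need the observation you make about uniform lower bounds on $D_k^{(N)}$; the three-term-recurrence argument is a little more in the spirit of the orthogonal-polynomial literature cited (\cite{C}, Section I.4), while the Heine-determinant argument is more self-contained and makes the "rational function of moments" structure fully explicit in one shot rather than one degree at a time.
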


\begin{proof}
We first observe that $ \tilde P_0^{(N)}=1 = U_0$, $U_1(x)=2x$  and, by  (\ref{hermite-zero-square-a}),
 $ \tilde P_1^{(N)}(x)= 2 \sqrt{\frac{N}{N-1}}\cdot x$. This proves the result for $k=0,1$.

The general case follows e.g.~by induction on $n$, Proposition \ref{prop-rate-moments-a},
 and the three-term-recurrence relation of the monic orthogonal polynomials 
associated with the orthonormal polynomials $ \tilde P_n^{(N)}$ and $U_n$; see Section I.4 of \cite{C}.
In both cases, the final orthonormalizations clearly preserve the order of convergence.
\end{proof}

In the end of this section we briefly discuss some possible applications of Lemma \ref{lemma-rate-polynomials-a}
to the variances of particles of Calogero-Moser-Sutherland models,
 when we first take the limit $k\to\infty$ and then the limit $N\to\infty$.
For this we choose an index $i(N) \in\{1,\ldots,N\}$ for every $N$ and consider the variances  
 $\sigma_{i(N),i(N)}^2=\sigma^2_{i(N),i(N)}(N)$ of the $i(N)$-th particles.
Using (\ref{covar-matrix-ortho}) and (\ref{connection-norm-a}), we have
\begin{equation}\label{exact-largest-a}
\sigma_{i(N),i(N)}^2(N)= \frac{1}{N}\sum_{n=0}^{N-1} \frac{1}{n+1} \tilde P_n^{(N)}(z_{i(N),N}/\sqrt{2N}  )^2 .
\end{equation}
By Lemma \ref{lemma-rate-polynomials-a}, 
$\sigma_{i(N),i(N)}^2(N)$ should be approximately equal to
\begin{equation}\label{approx-largest-a}
\tilde\sigma_{i(N),i(N)}^2(N):= \frac{1}{N}\sum_{n=0}^{N-1} \frac{1}{n+1}  U_n(z_{i(N),N}/\sqrt{2N}  )^2.
\end{equation}

We discuss this heuristic idea for the particles in the middle of the models.
To be more precise, we consider an odd number $N=2L-1$ ($L\in\mathbb N$) of particles and investigate the 
particle with number $L$.
In this case we use the representation 
(\ref{covariance-a-de}) of \cite{DE2} and get an exact asymptotic result for $L\to\infty$.
In fact, we use $z_{L,2L-1}=0$, (\ref{covariance-a-de}), as well as 
the formulas (5.5.1) and (5.5.4) of \cite{S} on Hermite
polynomials, as well as $H_{2n+1}(0)=0$ for  $n\in\mathbb N_0$. This and  Stirling's formula imply that
$$\tilde H_{2l}(0)^2= \frac{ (2l)!}{(l!)^2 \sqrt\pi \cdot 2^{2l}} \sim \frac{1}{\pi\sqrt l}$$
and thus
\begin{align}\label{intermediate-exact}
\sigma_{L,L}^2(2L-1)&= \sum_{l=0}^{L-1} \tilde H_{2l}(0)^4 \Biggl/
 \biggl( \sum_{l=0}^{L-1} \tilde H_{2l}(0)^2\biggr)^2 \\
&\sim  \sum_{l=1}^{L-1} \frac{1}{l}\biggl/ \biggl( \sum_{l=1}^{L-1} \frac{1}{\sqrt l}\biggr)^2
\quad\sim\quad \frac{\ln L}{(2\sqrt L)^2} 
\quad=\quad \frac{\ln L}{4 L}. \notag
\end{align}
for $L\to\infty$. 
This and Theorem \ref{clt-main-a} lead to the following result:

\begin{corollary}
For $L\in\mathbb N$ let $X_{t,k}^{(L)}$ be the position of the $L$-th particle in the middle of a system with $N=2L-1$ 
particles with multiplicity $k$. Then
\begin{equation}
\frac{2\sqrt L}{\sqrt{ t\ln L}}\cdot X_{t,k}^{(L)}
\end{equation}
tends in distribution to the standard normal distribution when first the limit
 $k\to\infty$ and then the limit $L\to\infty$ is taken.
\end{corollary}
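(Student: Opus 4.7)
The plan is to assemble the result by combining Theorem \ref{clt-main-a}, applied to the middle coordinate for $N=2L-1$, with the asymptotic estimate \eqref{intermediate-exact}. No new ingredient is needed beyond careful bookkeeping of the two limits.

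First I would note the crucial symmetry: since $H_{2L-1}$ is an odd polynomial, its zeros are symmetric about $0$ and the middle zero satisfies $z_{L,2L-1}=0$. Consequently, applying Theorem \ref{clt-main-a} with $N=2L-1$ to the $L$-th coordinate, the drift term $\sqrt{2k}\cdot z_{L,2L-1}$ vanishes. Therefore, for each fixed $L$, the $L$-th coordinate $X_{t,k}^{(L)}/\sqrt{t}$ converges in distribution, as $k\to\infty$, to a centered Gaussian with variance $\sigma_{L,L}^2(2L-1)$ (the $(L,L)$-entry of $\Sigma_{2L-1}$).

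Next I would use the asymptotic already derived in \eqref{intermediate-exact},
\[
\sigma_{L,L}^2(2L-1)\sim \frac{\ln L}{4L}\qquad(L\to\infty),
\]
which follows from the Dumitriu-Edelman representation \eqref{covariance-a-de} together with $H_{2n+1}(0)=0$ and the Stirling estimate $\tilde H_{2l}(0)^2\sim 1/(\pi\sqrt{l})$. This gives $\sqrt{\sigma_{L,L}^2(2L-1)}\sim \sqrt{\ln L}/(2\sqrt L)$.

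Finally I would combine the two steps in the stated order. For every fixed $L$, the first limit yields
\[
\frac{X_{t,k}^{(L)}}{\sqrt{t\,\sigma_{L,L}^2(2L-1)}}\;\xrightarrow{d}\; N(0,1)\qquad(k\to\infty).
\]
Since the deterministic rescaling factor $\tfrac{2\sqrt L}{\sqrt{t\ln L}}$ differs from $1/\sqrt{t\,\sigma_{L,L}^2(2L-1)}$ only by a factor that tends to $1$ as $L\to\infty$, a standard Slutsky-type argument (applied after passing to the limit in $k$ for each fixed $L$) gives convergence of $\tfrac{2\sqrt L}{\sqrt{t\ln L}}X_{t,k}^{(L)}$ to $N(0,1)$ in the iterated limit ($k\to\infty$ first, then $L\to\infty$). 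There is no serious obstacle here; the only point meriting a brief mention is that the two limits are taken successively, so one does not need to control uniformity in $k$, only the $L$-asymptotics of the Gaussian variance produced by the first limit.
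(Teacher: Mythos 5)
Your proposal is correct and matches the paper's (implicit) argument: the paper simply states the corollary immediately after deriving $\sigma_{L,L}^2(2L-1)\sim \ln L/(4L)$ in \eqref{intermediate-exact}, and your reconstruction — taking the $L$-th marginal of Theorem \ref{clt-main-a}, noting $z_{L,2L-1}=0$ kills the drift, then rescaling by the $L$-asymptotics of the variance via Slutsky for the successive limits — is exactly the bookkeeping intended.
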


On the other hand, we now study the  approximation $\tilde\sigma_{L,L}^2(2L-1)$
 of $\sigma_{L,L}^2(2L-1)$  above. In this case we use the polynomials $U_l$ as in 
(\ref{def-t2})
and consider the fixed angle $\theta=\pi/2$  with  $z_{L,2L-1}=0=\cos\theta$. Hence,
\begin{align}\label{intermediate-exact2}
\tilde\sigma_{L,L}^2(2L-1)&=
\frac{1}{2L-1}\sum_{k=0}^{2L-2} \frac{1}{k+1}\frac{\sin^2((k+1)\pi/2)}{\sin^2 (\pi/2)}
= \frac{1}{2L-1}\sum_{k=0}^{L-1} \frac{1}{2k+1}\notag\\
&\sim \frac{\ln L}{4 L}
\end{align}
for $L\to\infty$ which fits perfectly with (\ref{intermediate-exact}). 

We finally mention that performing similar operations for the rightmost particle with number $1$ does not yield the correct asymptotics for the corresponding variance. Here $z_{1,N}$ is the largest zero of $H_N$, and
 the Theorem of Plancherel-Rotach (see e.g. (6.3.9) of \cite{S})  shows that 
\begin{equation}\label{Plancherel-Rotach}
z_{1,N}/\sqrt{2N} = 1-\frac{i_1}{6^{1/3} (2N)^{2/3}} + o(N^{-2/3})
\end{equation}
with the first positive zero $i_1$ of the Airy function $\text{Ai}(-3^{1/3} x)$, where $\text{Ai}(x)$ is the solution of the differential equation
$$\frac{d^2}{d x^2}\text{Ai}(x)-x\text{Ai}(x)=0$$
with the condition that $\text{Ai}(x)\to 0$ as $x\to\infty$. In particular, $z_{1,N}/\sqrt{2N}\in [0,1]$
for $N$ sufficiently large.
For these $N$ we now choose $\theta_N\in [0,\pi]$ with
$\cos\theta_N=z_{1,N}/\sqrt{2N}$. Then, by (\ref {Plancherel-Rotach}),
$$1-\theta_N^2/2 +O(\theta_N^4)=\cos\theta_N=z_{1,N}/\sqrt{2N}=1-\frac{i_1}{6^{1/3} (2N)^{2/3}} + o(N^{-2/3})$$
and thus
$$\theta_N=\sqrt{\frac{2^{1/3}i_1}{6^{1/3}}} \cdot N^{-1/3} +o(N^{-1/3}).$$
It can be now  shown that
\begin{align}\label{asymptotic-tilde-sigma-a}
\tilde\sigma_{1,1}^2(N)&= \frac{1}{N}\sum_{n=0}^{N-1} \frac{1}{n+1}\frac{\sin^2((n+1)\theta_N)}{\sin^2 \theta_N} \\
&\sim  \frac{1}{N \theta_N}\sum_{n=0}^{N-1} \frac{\sin^2((n+1)\theta_N)}{(n+1)\theta_N} \notag\\
&\sim  \frac{1}{2N \theta_N}\sum_{n=0}^{N-1} \frac{1}{(n+1)\theta_N} \notag\\
&\sim  \frac{\ln N }{2N\theta_N^2} \sim  \frac{6^{1/3}}{i_12^{4/3}}\cdot \frac{\ln N }{N^{1/3}}.
\notag\end{align}
As stated above, numerical experiments show that this rate does not seem to be the correct 
one for $\sigma_{1,1}^2(N)$ for $N\to\infty$. It also differs from the rate given in \cite{DE2}.

We plan to investigate the orthogonal polynomials $\tilde P_n^{(N)}(x)$ and the relations between
$\sigma_{i(N),i(N)}^2(N)$ and $\tilde\sigma_{i(N),i(N)}^2(N)$ more closely in a forthcoming paper.

\section{The $B$-case and Laguerre polynomials}

We now study the covariance matrices of the Gaussian limit of Bessel processes $(X_{t,k})_{t\ge0}$ of type $B$. The processes live in the closed Weyl chamber
$$
C_N^B:=\{x\in {\b R}^N:x_1\geq x_2\geq\cdots\geq x_N\geq 0\},
$$
and their transition semigroup generator is
\begin{equation}
L_Bf:=\frac{1}{2} \Delta f + k_1 \sum_{i=1}^{N}\frac{1}{x_i}\frac{\partial}{\partial x_i}f
+k_2 \sum_{i=1}^N\Bigl( \sum_{j\ne i} \frac{1}{x_i-x_j}\frac{1}{x_i+x_j}+\Bigr) \frac{\partial}{\partial x_i}f.
\end{equation}
As in Section~\ref{CLT-A}, the multiplicities are non-negative real parameters which we take here as $(k_1,k_2)=(\beta\cdot\nu,\beta)$ with $\nu>0$ fixed and $\beta\to\infty$; henceforth, $k$ will be regarded as an integer variable unrelated to the multiplicities. For all other related quantities, such as the transition probabilities, we refer the reader to \cite{V}. 
 In this case, the limit is related to the ordered zeroes $z_{1,N}^{(\nu-1)}\geq\cdots  \geq z_{N,N}^{(\nu-1)}$ 
of the Laguerre polynomial $L_N^{(\nu-1)}$. These polynomials are orthogonal w.r.t. the density
 $\mathrm{e}^{-x}x^{\nu-1}$ by \cite{S}. We start with the following known 
analogue of Lemma~\ref{char-zero-A} above  from \cite{S,AKM2}.

\begin{lemma}\label{char-zero-B}
For $r\in C_N^B$,  the following statements are equivalent:
\begin{enumerate}
\item[\rm{(1)}] The function 
\[W_B(y):=2\sum_{i,j: i<j} \ln(y_i^2-y_j^2) +2\nu\sum_i \ln y_i-\|y\|^2/2\]
is maximal at $r\in C_N^B$;
\item[\rm{(2)}] For $i=1,\ldots,N$, $r=(r_1,\ldots,r_N)$ satisfies
\[\frac{r_i}{2}=\sum_{j: j\ne i}\frac{2r_i}{r_i^2-r_j^2}+\frac{\nu}{r_i};\]
\item[\rm{(3)}] If $z_{1,N}^{(\nu-1)}>\ldots> z_{N,N}^{(\nu-1)}>0$ are the ordered zeroes of $L_N^{(\nu-1)}$, then
\[2(z_{1,N}^{(\nu-1)},\ldots, z_{N,N}^{(\nu-1)})=(r_{1}^2,\ldots,r_{N}^2).\]
\end{enumerate}
\end{lemma}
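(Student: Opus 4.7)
The plan is to prove $(1) \Leftrightarrow (2)$ by a standard concavity / critical-point analysis of $W_B$ on the interior of $C_N^B$, and then to match $(2)$ with $(3)$ via the classical description of the zeros of $L_N^{(\nu-1)}$ through its defining differential equation; this parallels the strategy of Lemma~\ref{char-zero-A} in the $A$-case.

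For $(1) \Leftrightarrow (2)$, I would first observe that $W_B$ is strictly concave on the interior of $C_N^B$: the quadratic piece $-\|y\|^2/2$ is strictly concave, the logarithmic piece $2\nu\sum_i \ln y_i$ is concave on $\{y_N > 0\}$, and each cross term $2\ln(y_i^2 - y_j^2) = 2\ln(y_i - y_j) + 2\ln(y_i + y_j)$ is concave as a sum of logarithms of affine functions which are positive on the interior of $C_N^B$. Since $W_B \to -\infty$ on the boundary of $C_N^B$ and as $\|y\| \to \infty$, a global maximum exists, lies in the interior, and is unique. A direct differentiation gives
\[
\frac{\partial W_B}{\partial y_i}(y) = \sum_{j\ne i}\frac{4 y_i}{y_i^2 - y_j^2} + \frac{2\nu}{y_i} - y_i,
\]
and setting this to zero reproduces (after dividing by $2$) exactly the fixed-point system in (2).

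For $(2) \Leftrightarrow (3)$, I would multiply the $i$-th equation in (2) by $r_i$ and substitute $x_i := r_i^2/2$, which turns the system into
\[
x_i - \nu = \sum_{j\ne i}\frac{2 x_i}{x_i - x_j}, \qquad i=1,\ldots,N.
\]
This is the classical characterization of the zeros of $L_N^{(\nu-1)}$: writing $p := L_N^{(\nu-1)} = c\prod_i (x - x_i)$, logarithmic differentiation yields $p''(x_i)/p'(x_i) = 2 \sum_{j\ne i} 1/(x_i - x_j)$, so evaluating the Laguerre ODE $x p''(x) + (\nu - x) p'(x) + N p(x) = 0$ at a simple zero $x_i$ gives $2 x_i \sum_{j\ne i} 1/(x_i - x_j) = x_i - \nu$, matching the system above. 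Since $L_N^{(\nu-1)}$ has $N$ simple positive zeros and since (2) admits a unique ordered solution in $C_N^B$ by the concavity of $W_B$, the two solution sets coincide (with the orderings matched as claimed), giving (3).

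The argument is essentially routine; the only delicate points are the verification of strict concavity of $W_B$ (needed so that the critical point is unique and equal to the global maximum) and the behavior of $W_B$ near the boundary and at infinity (needed to locate the maximum in the interior of $C_N^B$). The algebraic passage from the critical-point system to the Laguerre ODE is then a direct manipulation once the identity $p''(x_i)/p'(x_i) = 2\sum_{j\ne i}1/(x_i - x_j)$ is in hand.
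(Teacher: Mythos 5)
Your proof is correct. Note that the paper does not actually provide a proof of Lemma~\ref{char-zero-B}: it is presented as a known result and attributed to \cite{S} and \cite{AKM2}, so there is no in-paper argument to compare against. Your two steps are exactly the standard route one would find in those references. The $(1)\Leftrightarrow(2)$ part via strict concavity of $W_B$ (quadratic strictly concave, logarithms of positive affine functions concave after splitting $\ln(y_i^2-y_j^2)=\ln(y_i-y_j)+\ln(y_i+y_j)$, and the log terms guarantee the maximum is interior) is sound, and the gradient computation does reproduce (2) after dividing by $2$. The $(2)\Leftrightarrow(3)$ part is the classical Stieltjes electrostatic characterization of Laguerre zeros: the substitution $x_i=r_i^2/2$ transforms (2) into $x_i-\nu=\sum_{j\ne i} 2x_i/(x_i-x_j)$, the identity $p''(x_i)/p'(x_i)=2\sum_{j\ne i}1/(x_i-x_j)$ together with the Laguerre ODE $xp''+(\nu-x)p'+Np=0$ shows the zeros of $L_N^{(\nu-1)}$ satisfy this system, and the uniqueness of the critical point of the strictly concave $W_B$ closes the equivalence. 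One small point worth making explicit: you need $\nu>0$ (equivalently $\alpha=\nu-1>-1$) so that $L_N^{(\nu-1)}$ indeed has $N$ simple positive zeros, and squaring is monotone on the nonnegatives so the stated ordering is preserved; you use both facts but state them only implicitly.
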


Using this lemma and the vector $r$ there, we have the following central limit theorem by \cite{V}.

\begin{theorem}\label{clt-main-b}
Consider the  Bessel processes $(X_{t,k})_{t\ge0}$ of type $B_{N}$ on $C_N^B$ for $k=(k_1,k_2)=(\beta\cdot\nu,\beta)$
 and $\beta,\nu>0$ with start in $x\in C_N^B$.
Then, for each $t>0$,
$$\frac{X_{t,(\beta\cdot\nu,\beta)}}{\sqrt t} -  \sqrt{\beta}\cdot r$$
converges for $\beta\to\infty$ to the centered $N$-dimensional distribution $N(0,\Sigma_N)$
with the regular covariance matrix $\Sigma_N$ with $\Sigma_N^{-1}=S_N=(s_{i,j})_{i,j=1,\ldots,N}$ given by
\begin{equation}\label{covariance-matrix-B}
s_{i,j}:=\left\{ \begin{array}{r@{\quad\quad}l}  1+\frac{2\nu}{r_i^2}+2\sum_{l\ne i} (r_i-r_l)^{-2}+2\sum_{l\ne i} (r_i+r_l)^{-2} & \text{for}\quad i=j, \\
   2(r_i+r_j)^{-2}-2(r_i-r_j)^{-2} & \text{for}\quad i\ne j.  \end{array}  \right. 
\end{equation}
The matrix $S_N$ satisfies $det\> S_N = N!2^N$.
\end{theorem}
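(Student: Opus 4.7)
The strategy is to mirror the proof of Theorem~\ref{clt-main-a-general-x}. By the Brownian scaling of the type-$B$ transition kernel I may assume $t=1$. The law of $X_{1,(\beta\nu,\beta)}-\sqrt{\beta}\cdot r$ has an explicit density on the shifted chamber $C_N^B-\sqrt{\beta}\cdot r$, which I would write in the form $\tilde c_\beta\,h_\beta(y)$: the prefactor $\tilde c_\beta$ absorbs $c_k^B$, the Gaussian factor $e^{-\|x\|^2/2}$, and the $y$-independent powers of $\sqrt{\beta}\cdot r$ extracted from $w_k^B$, while $h_\beta(y)$ carries the dependence on $y$ through $J_k^B(x,y+\sqrt{\beta}r)$, the Gaussian factor, and the ratio of shifted-to-unshifted weights.

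The heart of the argument is a second-order Taylor expansion of $\ln w_k^B(y+\sqrt{\beta}r)$ using the Lagrange remainder formula analogous to \eqref{ln-expansion-remainder-a}, applied to each of the three groups of factors $\prod_i y_i^{2k_1}$, $\prod_{i<j}(y_i-y_j)^{2k_2}$, and $\prod_{i<j}(y_i+y_j)^{2k_2}$. The linear-in-$y$ part combines with the cross term $-\sqrt{\beta}\langle y,r\rangle$ from $-\tfrac12\|y+\sqrt{\beta}r\|^2$; using the identity $\tfrac{2}{r_i-r_j}+\tfrac{2}{r_i+r_j}=\tfrac{4r_i}{r_i^2-r_j^2}$, the coefficient of $y_i$ simplifies to
$$\sqrt{\beta}\Bigl(\frac{2\nu}{r_i}+\sum_{j\ne i}\frac{4r_i}{r_i^2-r_j^2}-r_i\Bigr),$$
which vanishes identically thanks to the critical-point identity in Lemma~\ref{char-zero-B}(2). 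The surviving quadratic terms assemble into $-\tfrac12 y^T S_N y$ with entries exactly as in \eqref{covariance-matrix-B}, up to a controlled $O(\beta^{-1/2})$ error.

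The $J_k^B$ factor would be treated via the freezing limit for the type-$B$ Bessel function (available from the corresponding argument in \cite{V}, which itself rests on Corollary~8 of \cite{AM}) together with the universal bound $0<J_k^B(a,b)\le e^{\|a\|\cdot\|b\|}$ from \cite{RV2}. To justify passing to the limit in $\int f(y)h_\beta(y)\,dy$ for $f\in C_b(\mathbb{R}^N)$ via dominated convergence, I would split $\mathbb{R}^N$ according to $\|y\|\le\sqrt{\beta}\|r\|$ versus $\|y\|\ge\sqrt{\beta}\|r\|$, just as in \eqref{est-j-a}: on the first region $y/\sqrt{\beta}+r$ remains in a fixed compact set so that $J_k^B(x,y+\sqrt{\beta}r)$ is uniformly bounded and converges pointwise by the freezing limit, while on the complement the crude exponential bound on $J_k^B$ is absorbed by the Gaussian $e^{-\|y\|^2/2}$ produced by the quadratic expansion.

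The determinant identity $\det S_N=N!\,2^N$ is then extracted by the same bookkeeping as in the $A$-case: taking $f\equiv 1$ in the weak limit pins down $\lim_\beta\tilde c_\beta$, which on the other hand is computable directly from the $B$-analogue of the Macdonald-Mehta-Opdam constant via Stirling's formula as in Appendix~D of \cite{AKM2}; comparison with the Gaussian normalization $(2\pi)^{-N/2}\sqrt{\det S_N}$ yields the stated value. I expect the main obstacle to be the construction of a $\beta$-uniform integrable majorant for $J_k^B(x,y+\sqrt{\beta}r)$, since the freezing limit alone is only pointwise; the region split above, relying on the explicit decay of the Gaussian built from the quadratic expansion, is essential, and it is worth verifying carefully that the type-$B$ analogue of Lemma~\ref{decomposition-a} (if needed to reduce the argument of $J_k^B$ to a suitable subspace) holds in the form used.
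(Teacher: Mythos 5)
The paper does not prove this theorem itself: it is quoted directly from \cite{V} (the remark just before Theorem~\ref{clt-main-a-general-x} makes the dependence explicit by saying that the $A$-case proof in Section~2 is modelled on the $B$-case argument of \cite{V}). So there is no in-paper proof to compare against; your proposal is a reconstruction of the argument from \cite{V}.

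That said, your reconstruction is essentially the right one and parallels the proof of Theorem~\ref{clt-main-a-general-x} step by step: scaling to $t=1$; writing the shifted density as $\tilde c_\beta\,h_\beta(y)$; second-order Taylor expansion of $\ln w_k^B$ for the three families $\prod_i y_i^{2k_1}$, $\prod_{i<j}(y_i-y_j)^{2k_2}$, $\prod_{i<j}(y_i+y_j)^{2k_2}$; cancellation of the $\sqrt\beta$-linear terms against $-\sqrt\beta\langle y,r\rangle$ via Lemma~\ref{char-zero-B}(2) (your identity $\tfrac{2}{r_i-r_j}+\tfrac{2}{r_i+r_j}=\tfrac{4r_i}{r_i^2-r_j^2}$ and the resulting vanishing coefficient are correct); the surviving quadratic form matching $-\tfrac12 y^T S_N y$ with the entries of \eqref{covariance-matrix-B}; a freezing limit for $J_k^B$ (from Corollary~8 of \cite{AM}, as used in \cite{V}); dominated convergence via the universal bound $0<J(a,b)\le e^{\|a\|\|b\|}$ from \cite{RV2} and the region split $\|y\|\lessgtr\sqrt\beta\|r\|$; and the determinant $\det S_N=N!\,2^N$ read off by comparing $\lim_\beta\tilde c_\beta$ with the Gaussian normalization.

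One correction to your closing caveat: no $B$-analogue of Lemma~\ref{decomposition-a} is needed or should be sought. The decomposition in the $A$-case is required only because $A_{N-1}$ is not a spanning root system on $\mathbb R^N$ (the group acts trivially on $\mathbb R\cdot{\bf 1}$), whereas $B_N$ is irreducible and spans $\mathbb R^N$; hence $J_k^B$ can be handled directly, and the starting point $x\in C_N^B$ requires no projection onto a subspace. Correspondingly, the limit in the $B$-case is $N(0,\Sigma_N)$ rather than $N(\pi_{\bf 1}(x/\sqrt t),\Sigma_N)$, which is the cleaner statement that made the $B$-case easier in \cite{V} than the $A$-case treated in Section~2 of this paper.
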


We now
 proceed as in the previous section and determine the eigenvectors and eigenvalues of  $S_N$. 
It will be convenient for this to introduce the empirical probability measures
\begin{equation}\label{orthogonality-measure-b1}
\mu_{N,\nu}:=\frac{1}{2N(N+\nu-1)}(2z_{1,N}^{(\nu-1)}\delta_{2z_{1,N}^{(\nu-1)}}+\ldots+2z_{N,N}^{(\nu-1)}\delta_{2z_{N,N}^{(\nu-1)}}).
\end{equation}
As
\begin{equation}\label{sum-zeroes-b}
\sum_{k=1}^N z_{k,N}^{(\nu-1)}= N(N+\nu-1) 
\end{equation}
by Appendix C of \cite{AKM2}, these measures are probability measures.
Next, we study the family of orthogonal polynomials $(P_{k}^{(N,\nu)})_{k=0,\ldots,N-1}$
 with deg$[P_{k}^{(N,\nu)}]= k$
and positive leading coeffficients under the normalization
\begin{equation}\label{eq:Laguerrenormalization}
\sum_{i=1}^N 2z_{i,N}^{(\nu-1)}P_{k}^{(N,\nu)}(2z_{i,N}^{(\nu-1)})^2=1 \quad\quad(k=0,\ldots,N-1).
\end{equation}
This normalization, the notations of Lemma \ref{char-zero-B}(3), and the orthogonality of the
 $P_{k}^{(N,\nu)}$ ensure that  the matrices
\begin{equation}\label{trafo-matrix-b}
T_N:= (r_i\cdot P_k^{(N,\nu)}(r_i^2))_{i=1,\ldots,N, k=0,\ldots,N-1}
\end{equation}
are orthogonal.

The polynomials  $P_{k}^{(N,\nu)}$ can be computed explicitly for small degrees.
We have in particular,
\begin{align}
P_0^{(N,\nu)}(x)&= c_0,\quad P_1^{(N,\nu)}(x)=c_1(x-2(2N+\nu-2)),\ \textrm{and}\\
P_2^{(N,\nu)}(x)=c_2(x^2&-4(2N+\nu-3)x\notag\\
&\qquad+4[(2N+\nu-3)(2N+\nu-2)-N(N+\nu-1)])\notag
\end{align}
with the constants $c_0,c_1,$ and $c_2$ given by
\begin{align}
c_0^{-2}&=2N(N+\nu-1),\notag\\ 
c_1^{-2}&=8N(N+\nu-1)[N(N+\nu-1)-(2N+\nu-2)]\quad \text{and}\notag\\
c_2^{-2}&=32N(N+\nu-1)[N^2(N+\nu-1)^2-N(N+\nu-1)(6N+3\nu-8)\notag\\
&\qquad\qquad\qquad\qquad\qquad+2(2N+\nu-2)(2N+\nu-3)].
\end{align}
These formulae follow from direct calculations, 
and in particular the formula for $P_1^{(N,\nu)}$ stems from item (2) in Lemma~\ref{char-zero-B}.

 We characterize the matrix $S_N$ of type B in the following theorem.

\begin{theorem}\label{ev-b}
For $N\geq 2$, the matrix $S_N$ in Theorem~\ref{clt-main-b} has the eigenvalues $2, 4, \ldots, 2N$.
Moreover, for $k=0,1,\ldots,N-1$ and the eigenvalue $2(k+1)$,  an eigenvector is given by
\[(r_1P_{k}^{(N,\nu)}(r_1^2),\ldots,r_N P_{k}^{(N,\nu)}(r_N^2))^T.\]
In particular,  
$$S_N= T_N \cdot \textup{diag}(2,4,\ldots,2N) \cdot T_N^T.$$
\end{theorem}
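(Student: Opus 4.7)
The plan is to follow the proof of Theorem \ref{ev-a} closely, but working throughout with the squared variables $u_i := r_i^2 = 2z_{i,N}^{(\nu-1)}$, since the eigenvectors in Theorem \ref{ev-b} are precisely of the form $(r_i \cdot \text{polynomial in } u_i)_i$. First I would rewrite Lemma \ref{char-zero-B}(2) in the symmetric form
\begin{equation*}
\sum_{j\ne i}\frac{1}{u_i - u_j} = \frac{1}{4}-\frac{\nu}{2u_i},
\end{equation*}
which plays the role that Lemma \ref{char-zero-A}(2) played in the A-case. Using the elementary identities
\begin{equation*}
\frac{1}{(r_i-r_l)^2}+\frac{1}{(r_i+r_l)^2}=\frac{2(u_i+u_l)}{(u_i-u_l)^2},\qquad \frac{2}{(r_i+r_j)^2}-\frac{2}{(r_i-r_j)^2}=-\frac{8\, r_i r_j}{(u_i-u_j)^2},
\end{equation*}
together with the splittings $u_i + u_l = 2u_i - (u_i - u_l)$ and $u_j = u_i - (u_i - u_j)$, the action of $S_N$ on a vector of the form $w = (r_i\, g(u_i))_{i=1,\ldots,N}$ with $g$ an arbitrary polynomial collapses via the recast identity above to the compact divided-difference form
\begin{equation*}
\frac{(S_N w)_i}{r_i} \;=\; 2\,g(u_i) \;-\; 8\sum_{j\ne i}\frac{u_j\bigl(g(u_j)-g(u_i)\bigr)}{(u_i - u_j)^2}.
\end{equation*}
Applying this with $g(u) = u^{n-1}$ and expanding $u_j^{n-1}-u_i^{n-1} = -(u_i - u_j)\sum_{k=0}^{n-2} u_i^k u_j^{n-2-k}$ as in \eqref{rek-a1}--\eqref{rek-a2}, a bookkeeping of the coefficients (which only involves the power sums $S_l := \sum_i u_i^l$) shows that
\begin{equation*}
(S_N - 2nI_N)\bigl(r_i u_i^{n-1}\bigr)_i \;=\; \bigl(r_i\,\tilde r_{n-2}(u_i)\bigr)_i
\end{equation*}
for some polynomial $\tilde r_{n-2}$ of degree at most $n-2$ depending only on $N$ and $\nu$.

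With this reduction the induction proceeds exactly as in the A-case. The subspaces $V_k := \operatorname{span}\{(r_i u_i^l)_i : 0\le l\le k\}$ form an $S_N$-invariant flag, the base case $n=1$ is verified by the above compact formula with $g\equiv 1$ (giving $S_N(r_i)_i = 2(r_i)_i$), and the induction hypothesis says that for $m=1,\ldots,n-1$ there are eigenvectors $(r_i q_m(u_i))_i$ with $\deg q_m = m-1$ and eigenvalue $2m$. Since the eigenvalues $2,4,\ldots,2(n-1)$ are distinct from $2n$, the restriction $(S_N - 2nI_N)|_{V_{n-2}}$ is invertible, hence there is a unique $p_{n-2}$ of degree $\le n-2$ with
\begin{equation*}
(S_N - 2nI_N)\bigl(r_i p_{n-2}(u_i)\bigr)_i = -\bigl(r_i\,\tilde r_{n-2}(u_i)\bigr)_i,
\end{equation*}
and then $q_n(u) := u^{n-1} + p_{n-2}(u)$ yields the eigenvector for $2n$, exactly analogous to \eqref{rek-a3}--\eqref{rek-a5}.

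Finally, since $S_N$ is symmetric and the $N$ eigenvalues $2,4,\ldots,2N$ are distinct, the eigenvectors are mutually orthogonal in $\mathbb R^N$, i.e.\
\begin{equation*}
\sum_{i=1}^N u_i\, q_n(u_i)\, q_m(u_i) = 0 \qquad (n\ne m),
\end{equation*}
which is precisely orthogonality of $(q_n)$ with respect to the measure $\mu_{N,\nu}$ of \eqref{orthogonality-measure-b1}. Together with a suitable sign choice and the normalization \eqref{eq:Laguerrenormalization}, this identifies $q_n$ with $P_{n-1}^{(N,\nu)}$, and the matrix identity $S_N = T_N\,\operatorname{diag}(2,4,\ldots,2N)\,T_N^T$ is then equivalent to the orthogonality of $T_N$, which is itself just \eqref{eq:Laguerrenormalization} combined with the pairwise orthogonality above. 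The main obstacle I anticipate is the algebraic reduction producing the compact divided-difference formula for $(S_N w)_i/r_i$: the diagonal term $\frac{2\nu}{r_i^2}$, the two pole structures $(r_i\pm r_j)^{-2}$, and the Lemma \ref{char-zero-B}(2) identity must conspire precisely to give the leading eigenvalue $2n$ (rather than $n$, as in the A-case), and this conspiracy is the heart of the computation; once that identity is secured, the inductive and orthogonality steps follow the template of Theorem \ref{ev-a} essentially verbatim.
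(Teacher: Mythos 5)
Your proposal is correct and takes essentially the same approach as the paper: you derive the same compact reduced formula for $(S_N w)_i/r_i$ (the paper writes it as $(S_N v)_i = 2[v_i + 4\sum_{l\ne i} r_l (v_i r_l - v_l r_i)/(r_i^2-r_l^2)^2]$, which is your divided-difference expression after substituting $v_i = r_i g(u_i)$), then induct exactly as in the A-case and identify the resulting orthogonal system with $(P_k^{(N,\nu)})$ via symmetry of $S_N$ and the normalization \eqref{eq:Laguerrenormalization}. Working in the squared variables $u_i = r_i^2$ from the start, as you do, makes the bookkeeping slightly cleaner but is not a different argument.
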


\begin{proof}
The strategy of the proof is identical to that of Theorem~\ref{ev-a}, so we only specify the differences. 
In order to simplify the calculations that follow, we write down the action of the matrix $S_N$ on a generic vector $v$:
\begin{align}\label{rek-b0}
(S_N v)_i&=\sum_{j=1}^N s_{i,j}v_j=\Big(1+\frac{2\nu}{r_i^2}\Big)v_i+2\sum_{l:\> l\ne i}v_i\Big(\frac{1}{(r_i+r_l)^2}+\frac{1}{(r_i-r_l)^2}\Big)\\
&\qquad\qquad\qquad+2\sum_{j:\> j\ne i}v_j\Big(\frac{1}{(r_i+r_j)^2}-\frac{1}{(r_i-r_j)^2}\Big)\notag\\
&=\Big(1+\frac{2\nu}{r_i^2}\Big)v_i+4\sum_{l:\> l\ne i}v_i\frac{r_i^2+r_l^2}{(r_i^2-r_l^2)^2}-8\sum_{j:\> j\ne i}v_j\frac{r_i r_j}{(r_i^2-r_j^2)^2}\notag\\
&=\Big(1+\frac{2\nu}{r_i^2}\Big)v_i+4\sum_{l:\> l\ne i}\frac{v_i(r_i^2+r_l^2)-2v_lr_ir_l}{(r_i^2-r_l^2)^2}\notag\\
&=2v_i-4\sum_{l:\> l\ne i}\frac{v_i}{r_i^2-r_l^2}+4\sum_{l:\> l\ne i}\frac{v_i(r_i^2+r_l^2)-2v_lr_ir_l}{(r_i^2-r_l^2)^2}\notag\\
&=2\Big[v_i+2\sum_{l:\> l\ne i}\frac{v_i(r_i^2+r_l^2)-2v_lr_ir_l-v_i(r_i^2-r_l^2)}{(r_i^2-r_l^2)^2}\Big] \notag\\
&=2\Big[v_i+4\sum_{l:\> l\ne i}r_l\frac{v_ir_l-v_lr_i}{(r_i^2-r_l^2)^2}\Big].\notag
\end{align}
We used item~(2) in Lemma~\ref{char-zero-B} in the fifth line of the calculation. The induction here starts with $k=0$ and its corresponding eigenvector $(r_1,\ldots,r_N)^T$, giving 2 as the eigenvalue. For the eigenvalue 4, it can be easily verified that the corresponding eigenvector is given by
\[(r_1P_1^{(N,\nu)}(r_1^2),\ldots,r_NP_1^{(N,\nu)}(r_N^2))^T.\]

In the induction step, we consider the vector
\[v_{2k+1}:=(r_1^{2k+1},\ldots,r_N^{2k+1})^T,\]
and for $k>1$ we obtain the following using \eqref{rek-b0}:
\begin{align}\label{rek-b1}
((S_N&-2(k+1)I_N)v_{2k+1})_i= 2\Big[-kr_i^{2k+1}+4r_i\sum_{l:\> l\ne i}r_l^2\frac{r_i^{2k}-r_l^{2k}}{(r_i^2-r_l^2)^2}\Big]
\\
&= 2\Big[-kr_i^{2k+1}+4r_i\sum_{m=0}^{k-1}r_i^{2(k-1-m)}\sum_{l:\> l\ne i}\frac{r_l^{2(m+1)}}{r_i^2-r_l^2}\Big]
\notag\\
&= 2\Big[-kr_i^{2k+1}-4r_i\sum_{m=0}^{k-1}r_i^{2(k-1-m)}\sum_{l:\> l\ne i}\frac{r_i^{2(m+1)}-r_l^{2(m+1)}}{r_i^2-r_l^2}\notag\\
&\qquad+4r_i\sum_{m=0}^{k-1}r_i^{2k}\sum_{l:\> l\ne i}\frac{1}{r_i^2-r_l^2}\Big]
\notag\\
&= 2\Big[-kr_i^{2k+1}-4r_i\sum_{m=0}^{k-1}r_i^{2(k-1-m)}\sum_{l:\> l\ne i}\sum_{n=0}^m r_i^{2(m-n)}r_l^{2n}\notag\\
&\qquad+kr_i^{2k+1}(1-2\nu/r_i^2)\Big]
\notag\\
&= -4\Big[2r_i\sum_{n=0}^{k-1}\sum_{m=n}^{k-1}r_i^{2(k-1-n)}\sum_{l:\> l\ne i}r_l^{2n}+k\nu r_i^{2k-1}\Big].\notag
\end{align}
For the fourth equality we have made use of item (2) in Lemma~\ref{char-zero-B} again. Now, we introduce the sums $s_{n}=\sum_{j=1}^N r_j^n$ (note that $s_0=N$), and we write
\begin{align}\label{rek-b2}
((S_N&-2(k+1)I_N)v_{2k+1})_i= -4\Big[2r_i\sum_{n=0}^{k-1}(k-n)r_i^{2(k-1-n)}(s_{2n}-r_i^{2n})+k\nu r_i^{2k-1}\Big]\\
&= -4\Big[2\sum_{n=0}^{k-1}(k-n)s_{2n}r_i^{2(k-n)-1} - k(k+1)r_i^{2k-1}+k\nu r_i^{2k-1}\Big]\notag\\
&= -4\Big[2\sum_{n=1}^{k-1}(k-n)s_{2n}r_i^{2(k-n)-1}+k(2N+\nu-k-1)r_i^{2k-1}\Big].
\notag
\end{align}
We have used the requirement that $k>1$ for the last equality. Clearly, each term in this polynomial is of odd degree. As before, it can be confirmed directly that
\begin{equation}
s_{2l}=2\Big[\sum_{m=0}^{l-1}s_{2(l-1-m)}s_{2m}-(l-\nu)s_{2(l-1)}\Big]
\end{equation}
for $l>0$ with $s_0=N$, so all coefficients $s_{2l}$ are functions of $N$ and $\nu$. Therefore, we have a polynomial $p_{k-1}$ of degree $k-1$ such that
\begin{equation}
(S_N-2(k+1)I_N)v_{2k+1}=(r_1p_{k-1}(r_1^2),\ldots,r_N p_{k-1}(r_N^2))^T.
\end{equation}
The rest of the proof is virtually identical with that of Theorem~\ref{ev-a}, one only needs to keep track of the degrees of the polynomials in the induction step to obtain the (mutually orthogonal) eigenvectors of $S_N$. The associated polynomials are then orthogonal with respect to the measure $\mu_{N,\nu}$.
\end{proof}

Now, we study the polynomials $(P_{k}^{(N,\nu)})_k$ more closely for fixed $k$ and $\nu$ and large dimensions $N$ as in 
the preceding section.
For this we first conclude from Theorem 1 of Gawronski \cite{G} that the discrete probability measures
\begin{equation}
\frac{1}{N}(\delta_{z_{1,N}^{(\nu-1)}/4N}+\ldots+\delta_{z_{N,N}^{(\nu-1)}/4N})
\end{equation}
tend weakly to the beta distribution $\beta(1/2, 3/2)\in M^1([0,1])$, which has the density
$$f(t)=\frac{2}{\pi} t^{-1/2}(1-t)^{1/2}{\bf 1}_{[0,1]}(t).$$
As the zeroes
 $z_{i,N}^{(\nu-1)}/4N$ are contained in some compact interval for all $i,N$ (see e.g. Section 6.32 of \cite{S}),
we conclude readily from the definition of weak convergence that the measures
$$\frac{1}{4N^2}(z_{1,N}^{(\nu-1)}\delta_{z_{1,N}^{(\nu-1)}/4N}+\ldots+z_{N,N}^{(\nu-1)}\delta_{z_{N,N}^{(\nu-1)}/4N})$$
tend weakly to the measure on $[0,1]$ with density
$$\frac{2}{\pi} t^{1/2}(1-t)^{1/2}{\bf 1}_{[0,1]}(t)$$
where this measure has the mass $1/4$. Hence, after normalization, the probability measures
$$\frac{1}{N(N+\nu-1)}(z_{1,N}^{(\nu-1)}\delta_{z_{1,N}^{(\nu-1)}/4N}+\ldots+z_{N,N}^{(\nu-1)}\delta_{z_{N,N}^{(\nu-1)}/4N})$$
tend weakly to the probability measure on $[0,1]$ with density
$$\frac{8}{\pi} t^{1/2}(1-t)^{1/2}{\bf 1}_{[0,1]}(t).$$
After the transformation $[0,1]\to [-1,1]$,
$t\mapsto 2t-1$, the image of this measure is just the semicircle law $\mu_{sc}\in  M^1([-1,1])$ of the preceding section. 
 In summary we see that for random variables $Z_N$ with the distributions $\mu_{N,\nu}$ ($\nu$ fixed)
from (\ref{orthogonality-measure-b1}),  the transformed random variables 
$2\frac{Z_N}{4N}-1=\frac{Z_N}{2N}-1$
tend to  $\mu_{sc}$ in distribution. This observation in combination with the normalizations of the  $P_{k}^{(N,\nu)}$ in 
(\ref{eq:Laguerrenormalization})
proves readily and in a way similar to Lemma \ref{lemma-rate-polynomials-a}
the following convergence result for the $ P_{k}^{(N,\nu)}$ when $N\to\infty$ with  the
Tchebychev polynomials $(U_k)_{k\ge0}$  from Section 3 as limit:

\begin{lemma} For each  $\nu>0$,  and each integer $k\ge0$,
$$\lim_{N\to\infty}  \sqrt{2N(N+\nu-1)}\cdot P_{k}^{(N,\nu)}(4N(x+1))=U_k(x)$$
locally uniformly in $x$.
\end{lemma}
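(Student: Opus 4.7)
The plan is to follow the proof of Lemma~\ref{lemma-rate-polynomials-a} almost verbatim, with the semicircle law again appearing as the weak limit of a rescaled orthogonality measure. First, I observe that the normalization (\ref{eq:Laguerrenormalization}) together with the definition (\ref{orthogonality-measure-b1}) of $\mu_{N,\nu}$ is equivalent to
\[\int P_k^{(N,\nu)}(y)^2\, d\mu_{N,\nu}(y)=\frac{1}{2N(N+\nu-1)},\]
so that $\sqrt{2N(N+\nu-1)}\,P_k^{(N,\nu)}$ is the $L^2(\mu_{N,\nu})$-orthonormal polynomial of degree $k$ with positive leading coefficient. Applying the affine change of variable $y=4N(x+1)$ and denoting by $\tilde\mu_{N,\nu}$ the push-forward of $\mu_{N,\nu}$ under $y\mapsto y/(4N)-1$, the polynomials
\[\tilde P_k^{(N,\nu)}(x):=\sqrt{2N(N+\nu-1)}\,P_k^{(N,\nu)}(4N(x+1))\]
are precisely the $L^2(\tilde\mu_{N,\nu})$-orthonormal polynomials of degree $k$ with positive leading coefficient.

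Next I identify the weak limit of $\tilde\mu_{N,\nu}$. Its atoms sit at $x_i=z_{i,N}^{(\nu-1)}/(2N)-1=2(z_{i,N}^{(\nu-1)}/(4N))-1$, so the discussion immediately preceding the lemma (Gawronski's theorem combined with the substitution $t\mapsto 2t-1$) yields $\tilde\mu_{N,\nu}\to\mu_{sc}$ weakly. Since the numbers $z_{i,N}^{(\nu-1)}/(4N)$ lie in a fixed compact subinterval of $[0,1]$ for all $i$ and $N$ by Section~6.32 of \cite{S}, the supports of the $\tilde\mu_{N,\nu}$ are uniformly bounded, and weak convergence together with this uniform support bound upgrades to convergence of all moments.

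Finally, orthonormal polynomials depend continuously on their orthogonality measure: written in the monomial basis via Gram-Schmidt, the coefficients of the degree-$k$ orthonormal polynomial with positive leading coefficient are rational functions of the first $2k$ moments of the measure whose denominators are Hankel determinants, and these Hankel determinants are strictly positive at the limit $\mu_{sc}$ because $\mu_{sc}$ has infinite support. Therefore $\tilde P_k^{(N,\nu)}\to U_k$ coefficient-wise as $N\to\infty$, which is the same as locally uniform convergence on $\mathbb R$ for polynomials of bounded degree. Equivalently, one can induct on $k$ using the three-term recurrence of the associated monic orthogonal polynomials, exactly as in the proof of Lemma~\ref{lemma-rate-polynomials-a}. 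The only place requiring care is the bookkeeping for the rescaling constants in the first step, where the weight $2z_{i,N}^{(\nu-1)}$ built into $\mu_{N,\nu}$, the dilation $y/(4N)$, and the prefactor $\sqrt{2N(N+\nu-1)}$ must combine consistently; no serious obstacle is hidden here.
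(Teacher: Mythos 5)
Your proposal is correct and takes essentially the same route as the paper: reinterpreting the normalization \eqref{eq:Laguerrenormalization} as $L^2(\mu_{N,\nu})$-orthonormality, applying the affine rescaling, using Gawronski's theorem plus the uniformly compact supports to get moment convergence to $\mu_{sc}$, and then passing to the orthonormal polynomials exactly as in Lemma~\ref{lemma-rate-polynomials-a}. The Hankel-determinant argument you offer is a mild variant of the three-term-recurrence induction the paper invokes, but the underlying mechanism (continuity of orthonormal polynomials in the moments) is the same.
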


We expect that this limit can be used to derive additional limit results when we first take $\beta\to\infty$ 
and then $N\to\infty$, much like in the end of Section 3.

\end{document}